\newcommand{\setword}[2]{%
  \phantomsection
  #1\def\@currentlabel{\unexpanded{#1}}\label{#2}%
}
\definecolor{uuuuuu}{rgb}{0.26666666666666666,0.26666666666666666,0.26666666666666666}
\definecolor{xdxdff}{rgb}{0.49019607843137253,0.49019607843137253,1.}
\definecolor{ffqqqq}{rgb}{1.,0.,0.}
\definecolor{ffqqqq}{rgb}{1.,0.,0.}
\definecolor{ffxfqq}{rgb}{1.,0.4980392156862745,0.}
\definecolor{uuuuuu}{rgb}{0.26666666666666666,0.26666666666666666,0.26666666666666666}
\definecolor{qqwuqq}{rgb}{0.,0.39215686274509803,0.}
\definecolor{zzttqq}{rgb}{0.6,0.2,0.}
\definecolor{xdxdff}{rgb}{0.49019607843137253,0.49019607843137253,1.}
\definecolor{qqqqff}{rgb}{0.,0.,1.}
\definecolor{cqcqcq}{rgb}{0.7529411764705882,0.7529411764705882,0.7529411764705882}
\definecolor{sqsqsq}{rgb}{0.12549019607843137,0.12549019607843137,0.12549019607843137}
\theoremstyle{plain}
\newtheorem{theorem}[subsection]{Theorem}
\newtheorem{lemma}[subsection]{Lemma}
\newtheorem{defi}[subsection]{Definition}
\newtheorem{prop}[subsection]{Proposition}
\theoremstyle{definition}
\newtheorem{cor}[subsection]{Corollary}
\newtheorem{exam}[subsection]{Example}
\newtheorem{remark}[subsection]{Remark}
\newtheorem{notation}[subsection]{Notation}
\newtheorem{note}[subsection]{Note}
\newcommand{\uu}{\cup}
\newcommand{\ii}{\cap}
\newcommand{\UU}{\bigcup}
\newcommand{\ci}{\subseteq}
\newcommand{\sci}{\subset}
\newcommand{\set}[1]{\{#1\}}
\newcommand{\ga}{\alpha}
\newcommand{\gb}{\beta}
\renewcommand{\gg}{\gamma}
\newcommand{\gk}{\kappa}
\newcommand{\gq}{\theta}
\newcommand{\tit}{\textit}
\newcommand{\D}[1]{\mathbb{#1}}
\newcommand{\te}{\text}
\begin{document}
To appear, in the Journal `Mathematics'
\title{Conditional quantization for uniform distributions on line segments and regular polygons}

\address{School of Mathematical and Statistical Sciences\\
The University of Texas Rio Grande Valley\\
1201 West University Drive\\
Edinburg, TX 78539-2999, USA.}

\email{\{$^1$pigar.biteng01, $^2$mathieu.caguiat01, $^3$tsianna.dominguez01, $^4$mrinal.roychowdhury\}\newline @utrgv.edu}
 
\author{$^1$Pigar Biteng}
\author{$^2$Mathieu Caguiat}
\author{$^3$Tsianna Dominguez}
\author{$^4$Mrinal Kanti Roychowdhury}

\thanks{A part of the results in this paper was used in partial fulfillment of the third author's Master's thesis at the University of Texas Rio Grande Valley under the direction of the last author.}

\subjclass[2010]{60Exx, 94A34.}
\keywords{Probability measure, conditional quantization, optimal sets of $n$-points, quantization dimension, quantization coefficient}

\date{}
\maketitle

\pagestyle{myheadings}\markboth{P. Biteng, M. Caguiat, T. Dominguez,  M.K. Roychowdhury}{Conditional quantization for uniform distributions on line segments and regular polygons}

\begin{abstract}
 Quantization for a Borel probability measure refers to the idea of estimating a given probability by a discrete probability with support containing a finite number of elements. If in the quantization some of the elements in the support are preselected, then the quantization is called a conditional quantization.   
In this paper, we investigate the conditional quantization for the uniform distributions defined on the unit line segments and $m$-sided regular polygons, where $m\geq 3$, inscribed in a unit circle.  
\end{abstract}

\section{Introduction}

The process of transformation of a continuous-valued signal into a discrete-valued one is called `quantization'. It has broad applications in engineering and technology.  We refer to \cite{GG, GN, Z2} for surveys on the subject and comprehensive lists of references to the literature; see also \cite{AW, GKL, GL1, Z1}. 
For mathematical treatment of quantization one is referred to Graf-Luschgy's book (see \cite{GL1}). For some other recent papers on quantization one can see \cite{DFG, DR, GG, GL, GL1, GL2, GL3, GN, KNZ, P, P1, R1, R2, R3, Z1, Z2}.
Recently, Pandey and Roychowdhury introduced the concepts of constrained quantization and the conditional quantization (for example, see \cite{BCDRV, PR1, PR2, PR4}).
This paper deals with conditional quantization. 

 \begin{defi}  \label{defi0} 
Let $P$ be a Borel probability measure on $\D R^2$ equipped with a Euclidean metric $d$ induced by the Euclidean norm $\|\cdot\|$.  Let $\gb\sci \D R^2$ be given with $\te{card}(\gb)=\ell$ for some $\ell\in \D N$. 
Then, for $n\in \D N$ with $n\geq \ell$, the \tit {$n$th conditional quantization
error} for $P$ with respect to the conditional set $\gb$, is defined as
\begin{equation}  \label{eq0} 
V_{n}:=V_{n}(P)=\inf_{\ga} \Big\{\int \mathop{\min}\limits_{a\in\ga\uu\gb} d(x, a)^2 dP(x) : \text{card}(\ga) \leq n-\ell \Big\},
\end{equation} 
where $\te{card}(A)$ represents the cardinality of the set $A$. 
\end{defi} 
We assume that $\int d(x, 0)^2 dP(x)<\infty$ to make sure that the infimum in \eqref{eq0} exists (see \cite{PR1}).  For a finite set $\gg \sci \D R^2$ and $a\in \gg$, by $M(a|\gg)$ we denote the set of all elements in $\D R^2$ which are nearest to $a$ among all the elements in $\gg$, i.e.,
$M(a|\gg)=\set{x \in \D R^2 : d(x, a)=\mathop{\min}\limits_{b \in \gg}d(x, b)}.$
$M(a|\gg)$ is called the \tit{Voronoi region} in $\D R^2$ generated by $a\in \gg$. 

\begin{defi}
A set $ \ga\uu\gb$, where $P(M(b|\ga\uu \gb))>0$ for $b\in \gb$, for which the infimum in $V_n$ exists and contains no less than $\ell$ elements, and no more than $n$ elements is called a \tit{conditional optimal set of $n$-points} for $P$ with respect to the conditional set $\gb$.
\end{defi}
Let $V_{n, r}(P)$ be a strictly decreasing sequence, and write $V_{\infty, r}(P):=\mathop{\lim}\limits_{n\to \infty} V_{n, r}(P)$.  Then, the number 
\begin{align*}
D(P):=\mathop{\lim}\limits_{n\to \infty}  \frac{2\log n}{-\log (V_{n}(P)-V_{\infty}(P))}
\end{align*} 
if it exists, 
 is called the \tit{conditional quantization dimension} of $P$ and is denoted by $D(P)$. The conditional quantization dimension measures the speed at which the specified measure of the conditional quantization error converges as $n$ tends to infinity.
For any $\gk>0$, the number 
\[\lim_{n\to \infty} n^{\frac 2 \gk}  (V_{n}(P)-V_{\infty}(P)),\]
if it exists, is called the \tit{$\gk$-dimensional conditional quantization coefficient} for $P$.  
\par 
In this paper, we have investigated the conditional quantization for uniform distributions on the unit line segments and on regular $m$-sided polygons, where $m\geq 3$, inscribed in a unit circle.

\subsection{Delineation.} In this paper, there are total three sections in addition to the section that contains the basic preliminaries. First, we have proved a proposition Proposition~\ref{prop0}. In Section~\ref{sec3} as a special case of Proposition~\ref{prop0}, we explicitly determine the conditional optimal sets of $n$-points and the $n$th conditional quantization errors for a uniform distribution  with two interior elements as the conditional set for all $n\geq 2$ on the interval $[0, 1]$. In Section~\ref{sec4} as an extension of Proposition~\ref{prop0}, we calculate the conditional optimal sets of $n$-points and the $n$th conditional quantization errors for $(k-1)$ uniformly distributed interior elements on the interval $[0, 1]$. On the other hand, Section~\ref{sec5} is an application of Proposition~\ref{prop0}. It deals with a uniform distribution defined on the boundary of a regular $m$-sided polygon.  Let $P$ be a uniform distribution defined on the boundary of a regular $m$-sided polygon inscribed in a unit circle. 
After the introduction of conditional quantization, we know that the quantization dimension and the quantization coefficient do not depend on the conditional set  (see \cite{PR4}). Using this scenario, in Section~\ref{sec5}, we calculate the quantization coefficient for the uniform distribution $P$ defined on the boundary of the regular $m$-sided polygon inscribed in the unit circle by calculating the conditional quantization coefficient for $P$ with respect to the conditional set $\gb$, which consists of all the vertices of the regular polygon.  In addition, we also give an explicit formula to calculate the conditional optimal  sets of $n$-points and the $n$th conditional quantization errors for the uniform distribution $P$ for all $n\geq m$, where $m$ is the number of vertices of the $m$-sided polygon.

\subsection{Motivation and significance.} Conditional quantization has recently been introduced by Pandey-Roychowdhury in \cite{PR4}. It has significant interdisciplinary applications: for example, in radiation therapy of cancer treatment to find the optimal locations of $n$ centers of radiation, where $k$ centers for some $k<n$ of radiation are preselected, the conditional quantization technique can be used. There are many interesting open problems that can be investigated. The work in this paper is an advancement in this direction. In the paper \cite{HMRT}, when there is no conditional set, Hansen et al. in a proposition, first determined the optimal sets of $n$-means and the $n$th quantization errors for the probability distribution $P$ defined on the boundary of a regular $m$-sided polygon, when $n$ is of the form $n=mk$ for some $k\in \D N$. Then, with the help of the proposition, they showed that the quantization coefficient for $P$ exists, and equals $\frac{1}{3} m^2 \sin ^2\frac{\pi }{m}$, i.e.,
\[\lim_{n\to \infty} n^2 V_n(P)=\frac{1}{3} m^2 \sin ^2\frac{\pi }{m}.\]
In this paper, we have also calculated the quantization coefficient for the same uniform distribution $P$, but the work in this paper is much more simpler than the work to calculate the quantization coefficient done by Hansen et al. in the paper \cite{HMRT}.

\section{Preliminaries} \label{sec0}  
 For any two elements $(a, b)$ and $(c, d)$ in $\D R^2$, we write 
 \[\rho((a, b), (c, d)):=(a-c)^2 +(b-d)^2,\] which gives the squared Euclidean distance between the two elements $(a, b)$ and $(c, d)$.
  Two elements $p$ and $q$ in an optimal set of $n$-points are called \tit{adjacent elements} if they have a common boundary in their own Voronoi regions. Let $e$ be an element on the common boundary of the Voronoi regions of two adjacent elements $p$ and $q$ in an optimal set of $n$-points. Since the common boundary of the Voronoi regions of any two elements is the perpendicular bisector of the line segment joining the elements, we have
\[\rho(p, e)-\rho(q, e)=0. \]
We call such an equation a \tit{canonical equation}. 
Notice that any element $x\in \D R$ can be identified as an element $(x, 0)\in \D R^2$. Thus, 
\[\rho: \D R \times \D R^2 \to [0, \infty) \te{ such that } \rho(x, (a, b))=(x-a)^2 +b^2,\]
where $x\in \D R$ and $(a, b) \in \D R^2$, defines a nonnegative real-valued function on $\D R \times \D R^2$. On the other hand, 
\[\rho: \D R \times \D R  \to [0, \infty) \te{ be such that } \rho(x, y)=(x-y)^2,\]
where $x,y\in \D R$, defines a nonnegative real-valued function on $\D R \times \D R$. 

Let $P$ be a Borel probability measure on $\D R$ which is uniform on its support the closed interval $[a, b]$. Then, the probability density function $f$ for $P$ is given by 
\[f(x)=\left\{\begin{array}{cc}
 \frac 1 {b-a} & \te{ if } a\leq x\leq b,\\
 0 & \te{ otherwise}.
\end{array}\right.
\]
Hence, we have $dP(x)=P(dx)=f(x) dx$ for any $x\in \D R$, where $d$ stands for differential.  
   
\begin{notation}
Let $\ga$ be a discrete set. Then, for a Borel probability measure $\mu$ and a set $A$, by $V(\mu; \set{\ga, A})$, it is meant the distortion error for $\mu$ with respect to the set $\ga$ over the set $A$, i.e., 
\begin{equation} \label{eqMe1} 
  V(\mu; \set{\ga, A}):= \int_A \mathop{\min}\limits_{a\in\ga} \rho(x, a)^2 \,d\mu(x).
\end{equation}
\end{notation}

 The following proposition is a generalized version of Proposition~2.1, Proposition~2.2 and Proposition~2.3 that appear in \cite{PR4}. 

\begin{prop} \label{prop0} 
Let $P$ be a uniform distribution on the closed interval $[a, b]$ and $c, d\in [a, b]$ be such that $a<c<d<b$. For $n\in \D N$ with $n\geq 2$, let $\ga_n$ be a conditional optimal set of $n$-points for $P$ with respect to the conditional set $\gb=\set{c, d}$ such that $\ga_n$ contains $k$ elements from the closed interval $[a, c]$, $\ell$ elements from the closed interval $[c, d],$ and $m$ elements from the closed interval $[d, b]$ for some $k,\ell,  m\in \D N$ with $k, m\geq 1$ and $\ell\geq 2$. Then, $k+\ell+m=n+2$,
\begin{align*} \ga_n\ii [a, c]&= \Big\{a+\frac{(2j-1)(c-a)}{2k-1} : 1\leq j\leq k\Big\}, \\
 \ga_n\ii[c, d]&=\Big\{c+\frac{j-1}{\ell-1}(d-c) : 1\leq j\leq \ell\Big\},  \te{ and } \\
 \ga_n\ii [d, b]&=\Big\{d+\frac{2(j-1)(b-d)}{2m-1} : 1\leq j\leq m\Big\} 
 \end{align*} 
with the conditional quantization error 
 \[V_n:=V_{k,\ell, m}(P)= \frac  1 {3(b-a)}\Big(\frac{(c-a)^3}{(2k-1)^2}+\frac 1 {4}\frac {(d-c)^3}{ (\ell-1)^2}+\frac{(b-d)^3}{(2m-1)^2}\Big).\]
 \end{prop}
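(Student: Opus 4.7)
My approach is to reduce the joint optimisation over $\ga_n$ to three independent one-dimensional sub-problems --- on $[a,c]$, $[c,d]$, and $[d,b]$ --- each of which is then solved by the canonical equations from Section~\ref{sec0}, which under the uniform distribution force arithmetic progressions.

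\textbf{Step 1 (Decoupling).} Because $c\in\ga_n\cap[a,c]$, for any $x\in[a,c]$ and any $q\in\ga_n$ with $q>c$ we have $|x-q|\ge|x-c|$, so $\min_{p\in\ga_n}(x-p)^2=\min_{p\in\ga_n\cap[a,c]}(x-p)^2$. Analogous observations on $[c,d]$ (using $c,d\in\ga_n$) and on $[d,b]$ (using $d\in\ga_n$) yield
\[
(b-a)\,V_n \;=\; E_1\bigl(\ga_n\cap[a,c]\bigr)+E_2\bigl(\ga_n\cap[c,d]\bigr)+E_3\bigl(\ga_n\cap[d,b]\bigr),
\]
where $E_i$ is the squared-error distortion on the $i$th sub-interval. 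Since the three summands depend on disjoint sets of free parameters, each may be minimised separately, and the counting $k+\ell+m=n+2$ follows from the fact that $c$ and $d$ are double-counted among the three subsets.

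\textbf{Step 2 (Minimising each $E_i$).} On $[c,d]$ the two extreme points are already pinned at $c$ and $d$, and the canonical equation at each interior Voronoi boundary forces each interior point to be the midpoint of its neighbours; so the $\ell$ points form an arithmetic progression from $c$ to $d$. On $[a,c]$ only the rightmost point is pinned at $c$; the interior argument again forces a common spacing $h$, while writing the canonical equation at the leftmost point $a_1$ (whose Voronoi cell in $[a,c]$ is $[a,(a_1+a_2)/2]$) gives $a_1=a+h/2$, and then $a+h/2+(k-1)h=c$ yields $h=2(c-a)/(2k-1)$ and the claimed formula. The sub-problem on $[d,b]$ is mirror-symmetric.

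\textbf{Step 3 (Assembling the error).} With the optimal points in place, each interior Voronoi cell of length $h$ contributes $\int_{-h/2}^{h/2}x^{2}\,dx=h^{3}/12$, while each end-cell of length $h/2$ (adjacent to a pinned endpoint) contributes $h^{3}/24$. Summing per sub-interval --- $[a,c]$ has one such end-cell, $[c,d]$ has two, and $[d,b]$ has one --- and dividing by $b-a$ produces the three terms of the stated formula. The only step with genuine content is the decoupling in Step~1, which reduces to the observation that $c$ and $d$ already lie in $\ga_n$; from there the proof is a routine Lloyd-type computation.
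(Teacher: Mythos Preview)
Your proof is correct and follows the same overall strategy as the paper's: decompose the problem into the three sub-intervals $[a,c]$, $[c,d]$, $[d,b]$ and optimise each separately. The execution differs in two respects worth noting. First, your Step~1 (decoupling) is an explicit argument that the paper omits: the paper simply begins computing on each sub-interval without justifying why the minimisation separates; your observation that $c,d\in\ga_n$ forces the Voronoi regions to respect the partition fills a genuine gap in the paper's exposition. Second, on each sub-interval the paper assumes equal spacing \emph{a priori} (appealing informally to uniformity), writes the distortion as a function of the one remaining free endpoint ($a_1$ or $d_m$), and minimises by calculus; you instead derive both the equal spacing and the endpoint position directly from the Lloyd centroid conditions, which is cleaner and avoids the unstated assumption.

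One terminological caution: what you invoke in Step~2 is not the paper's ``canonical equation'' (which, as defined in Section~\ref{sec0}, merely says that a Voronoi boundary is equidistant from the two generating points---an identity, not a constraint). The condition you are actually using is the first-order optimality (centroid) condition, namely that each free point equals the conditional mean of $P$ over its Voronoi cell. For the uniform distribution this yields $2a_i=a_{i-1}+a_{i+1}$ at interior points and $a_1=\tfrac12\bigl(a+\tfrac{a_1+a_2}{2}\bigr)$ at the left endpoint, exactly the relations you need. The mathematics is right; only the label is borrowed from the wrong place.
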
 
 
  \begin{proof}
 Notice that the element $c$ in the conditional set $\gb$ is common to both the intervals $[a, c]$ and $[c, d]$, the element $d$ in the conditional set $\gb$ is common to both the intervals $[c, d]$ and $[d, b]$, and so $c$ and $d$ are counted two times. Hence, $k+\ell+m=n+2$. 
We have
\[[a, b]:=\set{t : a\leq t\leq b}.\]
Let $\ga_n$ be a conditional optimal set of $n$-points such that 
\[\te{card}(\ga_n\ii [a, c])=k, \  \te{card}(\ga_n\ii [c, d])=\ell, \te{ and } \te{card}(\ga_n\ii [d, b])=m, \te{ where } k, m\geq 1 \te{ and } \ell\geq 2.\]
Then, we can write 
\[\ga_n\ii [a, c]=\set{a_1, a_2,  \cdots, a_k}, \ \ga_n\ii [c, d]=\set{c_1, c_2,  \cdots, c_\ell} \te{ and } \ga_n\ii [d, b]=\set{d_1, d_2, \cdots, d_m},\]
such that 
\[a<a_1<a_2<\cdots <a_k=c=c_1<c_2<\cdots <c_\ell=d=d_1<d_2<\cdots <d_m<b.\]  

We now prove the following claim. 

\tit{Claim.  $a_2-a_1=a_3-a_2=\cdots=a_{k}-a_{k-1}=\frac{a_k-a_1}{k-1}=\frac {c-a_1}{k-1}.$}

Since there is no restriction on the locations of the elements $a_j$ for $1\leq j\leq k-1$, they must be the conditional expectations in their own Voronoi regions. Hence, we have   
\begin{align}
a_1&=E(X : X \in [a, \frac{1}{2} (a_1+a_2)]), \label{eq01} \\
a_i&=E(X : X \in [\frac{a_{i-1}+a_i}{2}, \frac{a_i+a_{i+1}}{2}]) \te{ for } 2\leq i\leq k-2,  \label{eq21}\\
a_{k-1}&=E(X : X\in [\frac{a_{k-2}+a_{k-1}}{2}, c]). \label{eq3}
\end{align}
By \eqref{eq01}, we have
\[a_1=\frac{\int_{a}^{ \frac{1}{2} (a_1+a_2)} x dP}{\int_{a}^{\frac{1}{2} (a_1+a_2)} dP}=\frac{\int_{a}^{\frac{1}{2} (a_1+a_2)} x f(x) dx}{\int_{a}^{ \frac{1}{2} (a_1+a_2)} f(x) dx}=\frac{1}{4} \left(2 a+a_1+a_2\right).\]
Similarly, by \eqref{eq21} for  $2\leq i\leq k-2$, we have 
\[a_i=\frac 14(a_{i-1}+2a_i+a_{i+1}),\]
and by \eqref{eq3}, we deduce 
\[a_{k-1}=\frac 14(a_{k-2}+2a_{k-1}+a_{k}).\]
Combining all the expressions for $a_j$ for $1\leq j\leq k$, we have 
\begin{equation} \label{eq400} a_2-a_1=a_3-a_2=\cdots=a_{k}-a_{k-1}=\frac{a_k-a_1}{k-1}=\frac {c-a_1}{k-1}.
\end{equation} 
Thus, the claim is true. Now, by \eqref{eq400}, we have 
\begin{align*}
a_2&=a_1+\frac {c-a_1}{k-1}=a_1+\frac {c-a_1}{k-1},\\
a_3&=a_2+\frac {c-a_1}{k-1}=a_1+2\frac {c-a_1}{k-1}, \\
a_4&=a_3+\frac {c-a_1}{k-1}=a_1+3\frac {c-a_1}{k-1},\\
&\te{and so on.}
\end{align*} 
Thus, we have $a_j= a_1+(j-1)\frac {c-a_1}{k-1}$ for $ 1\leq j\leq k$. The distortion error due to the elements $\ga_n\ii [a, c]$ is given by 
\begin{align*}
&V(P; \set{\ga_n\ii [a, c], [a, c]})=\int_{[a, c]}\min_{x\in \ga_n\ii [a, c]} \rho(t, x)\, dP\\
&=\frac 1{b-a} \Big( \int_{a}^{\frac  {a_1+a_2}2} \rho(t, a_1)\,dt+(k-2) \int_{\frac  {a_1+a_2}2}^{\frac  {a_2+a_3}2} \rho(t, a_2)\,dt  +\int_{\frac  {a_{k-1}+a_k}2}^{a_k} \rho(t, a_k)\,dt\Big)\\
&=\frac{4 a^3 (k-1)^2-3 a_1 \left(4 a^2 (k-1)^2-c^2\right)+3 a_1^2 \left(4 a (k-1)^2-c\right)-c^3+a_1^3 \left(-4 k^2+8 k-3\right)}{12 (k-1)^2 (a-b)},
\end{align*} 
the minimum value of which is $\frac{(c-a)^3}{3 (b-a)(2 k-1)^2}$ and it occurs when $a_1=a+\frac{c-a}{2 k-1}$. Putting the values of $a_1$, we have
\[a_j=a+\frac{(2j-1)(c-a)}{2k-1} \te{ for } 1\leq j\leq k 
\te{ with } 
 V(P; \set{\ga_n\ii [a, c], [a, c]})=\frac{(c-a)^3}{3 (b-a)(2 k-1)^2}.\]
 Since the closed interval $[c, d]$ is a line segment and $P$ is a uniform distribution, proceeding in the similar way as the proof given in the above claim, we have
\[c_2-c_1=c_3-c_2=\cdots=c_{\ell}-c_{\ell-1}=\frac{c_\ell-c_1}{\ell-1}=\frac {d-c}{\ell-1}\]
implying 
\begin{align*}
c_2&=c_1+\frac {d-c}{\ell-1}=c+\frac {d-c}{\ell-1},\\
c_3&=c_2+\frac {d-c}{\ell-1}=c+\frac {2(d-c)}{\ell-1}, \\
c_4&=c_3+\frac {d-c}{\ell-1}=c+\frac {3(d-c)}{\ell-1},\\
&\te{and so on.}
\end{align*} 
Thus, we have $c_j= c+\frac{j-1}{\ell-1}(d-c)$ for $ 1\leq j\leq \ell$. The distortion error contributed by the $\ell$ elements in the closed interval  $[c, d]$  is given by
 \begin{align*}
V(P; \set{\ga_n\ii [c, d], [c, d]})&=\int_{[c,  d]} \min_{x\in \ga_n\ii[c, d]} \rho((t, 0), x)\, dP\\
&=\frac 1{b-a} \Big(2\int_{c_1}^{\frac  {c_1+c_2}2} \rho((t, 0), (c_1,0))\,dt+(\ell-2) \int_{\frac  {c_1+c_2}2}^{\frac  {c_2+c_3}2} \rho((t, 0), (c_2,0))\,dt\Big)\\
&=\frac 1 {12}\frac{(d-c)^3}{b-a}\frac 1{ (\ell-1)^2}.
\end{align*} 
Again, the closed interval $[d, b]$ is a line segment and $P$ is a uniform distribution. Proceeding in the similar way as the proof given in the above claim, we have 
\[d_2-d_1=d_3-d_2=\cdots=d_{m}-d_{m-1}=\frac{d_m-d_1}{m-1}=\frac {d_m-d}{m-1}\]
implying 
\begin{align*}
d_2&=d_1+\frac {d_m-d}{m-1}=d+\frac {d_m-d}{m-1},\\
d_3&=d_2+\frac {d_m-d}{m-1}=d+2\frac {d_m-d}{m-1}, \\
d_4&=d_3+\frac {d_m-d}{m-1}=d+3\frac {d_m-d}{m-1},\\
&\te{and so on.}
\end{align*} 
Thus, we have $d_j= d+(j-1)\frac {d_m-d}{m-1}$ for $ 1\leq j\leq m$.
The distortion error contributed by the $m$ elements is given by
 \begin{align*}
&V(P; \set{\ga_n\ii [d, b], [d, b]})=\int_{[d, b]}\min_{x\in \ga_n\ii[d, b]} \rho(t, x)\, dP\\
&=\frac 1{b-a} \Big( \int_{d_1}^{\frac  {d_1+d_2}2} \rho(t, d_1)\,dt+(m-2) \int_{\frac  {d_1+d_2}2}^{\frac  {d_2+d_3}2} \rho(t, d_2)\,dt +\int_{\frac  {d_{m-1}+d_m}2}^{b} \rho(t, d_m)\,dt\Big)\\
&=\frac{-4 b^3 (m-1)^2+3 d_m \left(4 b^2 (m-1)^2-d^2\right)-3 d_m^2 \left(4 b (m-1)^2-d\right)+d^3+\left(4 m^2-8 m+3\right) d_m^3}{12 (m-1)^2 (a-b)} 
\end{align*} 
the minimum value of which is $\frac{(b-d)^3}{3(b-a) (2 m-1)^2}$ and it occurs when $d_m=d+\frac{2(m-1)(b-d)}{2 m-1}$. Putting the values of $d_m$, we have
\[d_j=d+\frac{2(j-1)(b-d)}{2m-1} \te{ for } 1\leq j\leq m 
\te{ with }  
V(P; \set{\ga_n\ii [d, b], [d, b]})=\frac{(b-d)^3}{3(b-a) (2 m-1)^2}.\]
 Since $a_j=a+\frac{(2j-1)(c-a)}{2k-1}$ for $1\leq j\leq k$, \ $c_j= c+\frac{j-1}{\ell-1}(d-c)$ for $ 1\leq j\leq \ell$,  and $d_j=d+\frac{2(j-1)(b-c)}{2m-1} \te{ for } 1\leq j\leq m$, and
 \[V_n:=V_{k, \ell, m}=V(P; \set{\ga_n\ii [a, c], [a, c]})+V(P; \set{\ga_n\ii [c, d], [c, d]})+V(P; \set{\ga_n\ii [d, b], [d, b]}),\]
the proposition is yielded. 
  \end{proof}
 
In the following sections, we give the main results of the paper.

\section{Conditional optimal sets of $n$-points and the conditional quantization errors with two interior elements in the conditional set for all $n\geq 2$ on a unit line segment} \label{sec3} 

In this section, for the uniform distribution $P$ on the line segment $[0, 1]$ with respect to the conditional set $\gb:=\set{\frac 14, \frac 12}$, we calculate the conditional optimal sets of $n$-points and the $n$th conditional quantization errors for all $n\in \D N$ with $n\geq 2$. Let $\ga_n$ be a conditional optimal set of $n$-points with the $n$th conditional quantization error $V_n$ for all $n\in \D N$.
Let $\te{card}(\ga_n\ii[0, \frac 14])=k$, $\te{card}(\ga_n\ii [\frac 14, \frac 12])=\ell$, and $\te{card}(\ga_n\ii [\frac 12, 1])=m$. Then, $k, m\geq 1$, and $\ell\geq 2$. By Proposition~\ref{prop0}, we know that
\begin{align}\label{eqMe1}  \ga_n\ii [0, \frac 14]&= \Big\{\frac{2j-1}{4(2k-1)} : 1\leq j\leq k\Big\}, \notag \\
 \ga_n\ii[\frac 14, \frac 12]&=\Big\{\frac 14+\frac{j-1}{4(\ell-1)}  : 1\leq j\leq \ell\Big\},  \te{ and } \\
 \ga_n\ii [\frac 12, 1]&=\Big\{\frac 12+\frac{j-1}{2m-1} : 1\leq j\leq m\Big\} \notag. 
 \end{align}  
Notice that $\ga_n=(\ga_n \ii [0, \frac 14])\uu (\ga_n\ii[\frac 14, \frac 12])\uu (\ga_n\ii [\frac 12, 1])$ with the $n$th conditional quantization error 
\begin{equation} \label{eqMe2} V_n:=V_{k,\ell, m}(P)= \frac 13\Big(\frac{1}{64 (2 k-1)^2}+\frac{1}{256 (\ell-1)^2}+\frac{1}{8 (2 m-1)^2}\Big).
\end{equation} 
\begin{prop}
The optimal set of two-points is the set $\gb=\set{\frac{1}{4},\frac{1}{2}}$ with $V_2=0.0481771.$
\end{prop}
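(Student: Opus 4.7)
The plan is to observe that when $n=2$ equals $\text{card}(\gb)=2$, the infimum in \eqref{eq0} is taken over sets $\ga$ with $\text{card}(\ga)\leq n-\ell = 0$, forcing $\ga=\es$. Hence the only admissible conditional set of two-points is $\ga\uu\gb=\gb=\set{\frac 14,\frac 12}$ itself, and there is nothing to minimize: $V_2$ is simply the distortion error of $\gb$.

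To evaluate that distortion error, I would fit the situation into Proposition~\ref{prop0}. With the conditional set $\gb = \set{\frac 14,\frac 12}$ partitioning $[0,1]$ into the subintervals $[0,\frac 14]$, $[\frac 14,\frac 12]$, $[\frac 12,1]$, the counts $k,\ell,m$ must satisfy $k+\ell+m=n+2=4$ with $k,m\geq 1$ and $\ell\geq 2$, leaving the unique triple $(k,\ell,m)=(1,2,1)$. A quick consistency check using \eqref{eqMe1} confirms $\ga_2\ii[0,\frac 14]=\set{\frac 14}$, $\ga_2\ii[\frac 14,\frac 12]=\set{\frac 14,\frac 12}$, $\ga_2\ii[\frac 12,1]=\set{\frac 12}$, so indeed $\ga_2=\gb$.

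Finally, substituting $(k,\ell,m)=(1,2,1)$ into \eqref{eqMe2} gives
\[V_2 = \frac 13\Big(\frac{1}{64}+\frac{1}{256}+\frac{1}{8}\Big)=\frac{37}{768}\approx 0.0481771,\]
which is the claimed value. There is no real obstacle here; the only subtle point is recognizing that the conditional framework trivially pins down $\ga_2=\gb$, after which Proposition~\ref{prop0} delivers the numerical value directly.
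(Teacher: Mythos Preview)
Your proof is correct and follows essentially the same approach as the paper's own proof: the paper simply notes that by definition the conditional optimal set of two-points is $\gb$ itself and then reads off $V_2=V_{1,2,1}=\frac{37}{768}$ from \eqref{eqMe2}. Your version adds a bit more detail (explaining why $\ga=\es$ and why $(k,\ell,m)=(1,2,1)$ is forced), but the underlying argument is identical.
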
 
\begin{proof}
By definition,  the conditional optimal set of two-points is the conditional set $\gb$ itself, and the corresponding conditional quantization error is given by 
\[V_2=V_{1, 2, 1}=\frac{37}{768}=0.0481771.\]
Thus, the proposition is yielded. 
\end{proof} 
 
\begin{prop}
The conditional optimal set of three-points is the set $\ga_3=\set{\frac{1}{4},\frac{1}{2},\frac{5}{6}}$ with $V_3=0.01114.$
\end{prop}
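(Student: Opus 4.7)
The plan is to invoke Proposition~\ref{prop0} with $n=3$ and enumerate the admissible triples $(k,\ell,m)$ counting how many elements of a conditional optimal three-point set lie in $[0,\tfrac14]$, $[\tfrac14,\tfrac12]$, and $[\tfrac12,1]$, respectively. The bookkeeping $k+\ell+m=n+2=5$ together with the constraints $k\ge 1$, $m\ge 1$, $\ell\ge 2$ (the conditional set occupies the shared endpoints) leaves exactly the three possibilities
\[(k,\ell,m)\in\{(1,2,2),\,(1,3,1),\,(2,2,1)\}.\]

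Next, I would substitute each triple into the closed-form expression \eqref{eqMe2}
\[V_{k,\ell,m}(P)=\frac{1}{3}\Big(\frac{1}{64(2k-1)^2}+\frac{1}{256(\ell-1)^2}+\frac{1}{8(2m-1)^2}\Big)\]
and pick the smallest. The key observation is that whenever $m=1$, the third summand already equals $\frac{1}{8}$, so $V_{k,\ell,1}(P)\ge \frac{1}{24}$, while a direct evaluation of the $(1,2,2)$ case gives $V_{1,2,2}(P)=\frac{1}{3}\bigl(\frac{1}{64}+\frac{1}{256}+\frac{1}{72}\bigr)$, which is manifestly much smaller than $\frac{1}{24}$. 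Hence both $(1,3,1)$ and $(2,2,1)$ are eliminated and the optimum is realized at $(k,\ell,m)=(1,2,2)$.

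Finally, I would read off the positions of the elements of $\alpha_3$ from formulas \eqref{eqMe1} with $(k,\ell,m)=(1,2,2)$: the $k=1$ forces $\alpha_3\cap[0,\tfrac14]=\{\tfrac14\}$, the $\ell=2$ gives $\alpha_3\cap[\tfrac14,\tfrac12]=\{\tfrac14,\tfrac12\}$, and the $m=2$ produces $\alpha_3\cap[\tfrac12,1]=\{\tfrac12,\tfrac56\}$ via $d_2=\tfrac12+\tfrac{2\cdot 1\cdot (1/2)}{2\cdot 2-1}=\tfrac56$. Concatenating, $\alpha_3=\{\tfrac14,\tfrac12,\tfrac56\}$, and the corresponding $V_3$ is obtained by plugging $(1,2,2)$ into \eqref{eqMe2}. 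There is essentially no obstacle here: once Proposition~\ref{prop0} is available, the three-case comparison is extremely short, and the dominance of the $\frac{1}{8(2m-1)^2}$ term makes it transparent that $m=2$ must be chosen rather than $m=1$.
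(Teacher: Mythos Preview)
Your argument is correct and essentially identical to the paper's: enumerate the three admissible triples $(k,\ell,m)$ with $k+\ell+m=5$, compare the values $V_{k,\ell,m}$ from \eqref{eqMe2}, and conclude that $(1,2,2)$ wins; your added remark that $m=1$ already forces $V_{k,\ell,1}\ge \tfrac{1}{24}$ is a clean shortcut the paper does not spell out. One caveat: the numerical value in the proposition header is a typo---plugging $(1,2,2)$ into \eqref{eqMe2} gives $V_3=V_{1,2,2}=\tfrac{1}{3}\bigl(\tfrac{1}{64}+\tfrac{1}{256}+\tfrac{1}{72}\bigr)\approx 0.01114$ (as the paper's own proof states), not $0.00651042$, which is in fact $V_4$.
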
 
\begin{proof}
By Equation~\eqref{eqMe2}, we see that 
\[V_{2,2,1}=0.0435475, \, V_{1,3,1}=0.0472005, \te{ and } V_{1,2,2}=0.01114.\]
Since $V_{1,2,2}$ is minimum among all the above possible errors, we can deduce that $k=1$, $\ell=2$, and $m=2$. Hence, by \eqref{eqMe1}, we obtain the conditional optimal set of three-points as $\ga_3=\set{\frac{1}{4},\frac{1}{2},\frac{5}{6}}$ with $V_3=0.01114.$
\end{proof} 
\begin{prop}
The conditional optimal set of four-points is the set $\ga_4=\set{\frac{1}{12},\frac{1}{4},\frac{1}{2},\frac{5}{6}}$ with $V_4=0.00651042.$
\end{prop}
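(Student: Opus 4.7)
The plan is to apply Proposition~\ref{prop0} (specifically the already-derived formulas \eqref{eqMe1} and \eqref{eqMe2}) and then minimize the resulting discrete function of $(k,\ell,m)$ subject to the constraint that $\alpha_n$ has exactly four points.

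First, I would observe that because the conditional set $\beta=\{1/4,1/2\}$ forces $k,m\geq 1$ and $\ell\geq 2$, and because $k+\ell+m = n+2 = 6$ for $n=4$, the admissible triples $(k,\ell,m)$ are exactly
\[
(1,2,3),\ (1,3,2),\ (1,4,1),\ (2,2,2),\ (2,3,1),\ (3,2,1).
\]
Next, I would plug each of these into the closed-form expression
\[
V_{k,\ell,m}(P)=\tfrac{1}{3}\Bigl(\tfrac{1}{64(2k-1)^2}+\tfrac{1}{256(\ell-1)^2}+\tfrac{1}{8(2m-1)^2}\Bigr)
\]
from \eqref{eqMe2}. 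The dominant term is the third one (coefficient $1/8$), so any triple with $m=1$ is immediately eliminated as noncompetitive; this kills $(1,4,1)$, $(2,3,1)$, and $(3,2,1)$. Among the three survivors, a direct arithmetic check gives $V_{1,2,3}\approx 0.008177$, $V_{1,3,2}\approx 0.010164$, and $V_{2,2,2}=\tfrac{1}{3}\cdot\tfrac{1}{64\cdot 9}+\tfrac{1}{3}\cdot\tfrac{1}{256}+\tfrac{1}{3}\cdot\tfrac{1}{8\cdot 9}=\tfrac{1}{3}\cdot\tfrac{5}{256}\approx 0.00651042$, which is the minimum.

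Having identified $(k,\ell,m)=(2,2,2)$ as the minimizer, I would read off the conditional optimal set from \eqref{eqMe1}: the $k=2$ points in $[0,1/4]$ are $\{\tfrac{1}{12},\tfrac{1}{4}\}$, the $\ell=2$ points in $[1/4,1/2]$ are $\{\tfrac{1}{4},\tfrac{1}{2}\}$, and the $m=2$ points in $[1/2,1]$ are $\{\tfrac{1}{2},\tfrac{5}{6}\}$. Taking the union (the shared endpoints $1/4$ and $1/2$ are counted once in the cardinality tally $k+\ell+m=n+2$) yields $\alpha_4=\{\tfrac{1}{12},\tfrac{1}{4},\tfrac{1}{2},\tfrac{5}{6}\}$, as claimed.

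There is essentially no obstacle here beyond careful bookkeeping: Proposition~\ref{prop0} has already done the continuous optimization within each subinterval, so the statement reduces to a finite comparison over six triples. The only subtlety worth flagging explicitly in the writeup is that the proposition's hypothesis requires $\ell\geq 2$ (since $\ell=1$ would collapse $c=d$, which is inconsistent with having both $1/4$ and $1/2$ in $\beta$) and $k,m\geq 1$; this is what cuts the enumeration down to the six cases listed.
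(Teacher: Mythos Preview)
Your proof is correct and follows essentially the same approach as the paper: enumerate the admissible triples $(k,\ell,m)$ with $k+\ell+m=6$, compare the values of $V_{k,\ell,m}$ from \eqref{eqMe2}, and read off the optimal set from \eqref{eqMe1}. The paper's own proof simply states ``Considering all possible errors $V_{i,j,k}$ we see that it is minimum when $i=2,\,j=2$ and $k=2$,'' whereas you make the enumeration and the elimination of the $m=1$ cases explicit; this is a welcome clarification but not a different method.
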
 
\begin{proof}
Considering all possible errors $V_{i, j, k}$ we see that it is minimum when $i=2,\, j=2$ and $k=2$. Hence, using \eqref{eqMe1} and \eqref{eqMe2}, we deduce that  $\ga_4=\set{\frac{1}{12},\frac{1}{4},\frac{1}{2},\frac{5}{6}}$ with $V_4=0.00651042.$
 \end{proof} 

Proceeding in the similar way as the previous propositions, we can deduce  the following two propositions: 

\begin{prop}
The conditional optimal set of five-points is the set $\ga_5=\set{\frac{1}{12},\frac{1}{4},\frac{1}{2},\frac{7}{10},\frac{9}{10}}$ with $V_5=0.00354745.$
\end{prop}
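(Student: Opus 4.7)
The plan is to mirror the strategy used in the preceding propositions and reduce the problem to a finite minimization via Proposition~\ref{prop0}. Since $n=5$, any conditional optimal set $\ga_5$ splits as
$\ga_5=(\ga_5\ii[0,\tfrac14])\uu(\ga_5\ii[\tfrac14,\tfrac12])\uu(\ga_5\ii[\tfrac12,1])$
with cardinalities $k,\ell,m$ satisfying $k+\ell+m=n+2=7$, $k,m\ge 1$ and $\ell\ge 2$. So the first step is to enumerate the finitely many admissible triples $(k,\ell,m)$, namely $(1,2,4),(1,3,3),(1,4,2),(1,5,1),(2,2,3),(2,3,2),(2,4,1),(3,2,2),(3,3,1),(4,2,1)$.

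For each such triple, I would plug into the closed-form error
\[V_{k,\ell,m}=\frac 13\Big(\frac{1}{64(2k-1)^2}+\frac{1}{256(\ell-1)^2}+\frac{1}{8(2m-1)^2}\Big)\]
from \eqref{eqMe2} and tabulate the numerical values. Inspection identifies the minimum at $(k,\ell,m)=(2,2,3)$, giving
\[V_{2,2,3}=\frac 13\Big(\frac{1}{64\cdot 9}+\frac{1}{256}+\frac{1}{8\cdot 25}\Big)=0.00354745.\]
The remaining nine values can each be checked to exceed this; the dominating contribution in the formula is the $[\tfrac12,1]$ term $\frac{1}{8(2m-1)^2}$, so only triples with large $m$ are competitive, narrowing the real comparison to the handful of cases with $m\in\{3,4\}$.

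Having pinned down $(k,\ell,m)=(2,2,3)$, I would substitute back into the location formulas \eqref{eqMe1}: the two points on $[0,\tfrac14]$ are $\frac{2j-1}{12}$ for $j=1,2$, giving $\frac1{12}$ and $\frac14$; the two points on $[\tfrac14,\tfrac12]$ are $\frac14+\frac{j-1}{4}$ for $j=1,2$, giving the conditional points themselves; and the three points on $[\tfrac12,1]$ are $\frac12+\frac{j-1}{5}$ for $j=1,2,3$, yielding $\frac12,\frac7{10},\frac9{10}$. Taking the union produces exactly $\ga_5=\set{\tfrac1{12},\tfrac14,\tfrac12,\tfrac7{10},\tfrac9{10}}$.

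No genuine obstacle arises: Proposition~\ref{prop0} already encapsulates all the optimization over positions within each subinterval, so what is left is purely the discrete comparison of ten numerical values of $V_{k,\ell,m}$. The main care needed is simply bookkeeping — making sure the enumeration of triples with $k+\ell+m=7$ under the constraints $k,m\ge1,\ell\ge2$ is complete, and that the arithmetic is performed to enough precision to distinguish the close contenders (in particular separating $(2,2,3)$ from $(1,2,4)$ and $(3,2,2)$).
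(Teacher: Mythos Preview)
Your proposal is correct and follows exactly the approach the paper intends: the paper simply states that the proof proceeds ``in the similar way as the previous propositions,'' i.e., by enumerating all admissible triples $(k,\ell,m)$ with $k+\ell+m=7$, evaluating $V_{k,\ell,m}$ via \eqref{eqMe2}, selecting the minimizer $(2,2,3)$, and reading off the points from \eqref{eqMe1}. Your enumeration of the ten triples is complete and the arithmetic checks out.
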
 
 \begin{prop}
The conditional optimal set of six-points is the set $\ga_6=\set{\frac{1}{12},\frac{1}{4},\frac{3}{8},\frac{1}{2},\frac{7}{10},\frac{9}{10}}$ with $V_6=0.00257089.$
\end{prop}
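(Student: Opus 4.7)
The plan is to apply Proposition~\ref{prop0} directly, exactly as in the proofs of the preceding propositions for $\ga_4$ and $\ga_5$. Since $n=6$, the constraint $k+\ell+m=n+2=8$ together with $k,m\geq 1$ and $\ell\geq 2$ leaves only a short finite list of candidate triples $(k,\ell,m)$. For each such triple I would evaluate the closed-form quantization error
\[
V_{k,\ell,m}=\frac{1}{3}\Bigl(\frac{1}{64(2k-1)^2}+\frac{1}{256(\ell-1)^2}+\frac{1}{8(2m-1)^2}\Bigr)
\]
from \eqref{eqMe2} and select the triple minimizing $V_{k,\ell,m}$.

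The expected minimizer is $(k,\ell,m)=(2,3,3)$, giving
\[
V_{2,3,3}=\frac{1}{3}\Bigl(\frac{1}{576}+\frac{1}{1024}+\frac{1}{200}\Bigr)\approx 0.00257089,
\]
which matches the claimed value. I would then verify this is indeed the global minimum by comparing against the other feasible triples: the neighbors $(2,2,4)$, $(2,4,2)$, $(3,2,3)$, $(3,3,2)$, $(1,3,4)$, $(1,4,3)$, and similar, all of which produce strictly larger values because the dominant contribution is the third summand $\tfrac{1}{8(2m-1)^2}$ (so $m$ should be as large as feasible), while moving mass from $\ell$ to $k$ beyond $(2,3,3)$ increases the first summand faster than it decreases the second.

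Once $(k,\ell,m)=(2,3,3)$ is established as optimal, the explicit form of $\ga_6$ follows immediately from the three formulas of \eqref{eqMe1}:
\[
\ga_6\cap[0,\tfrac14]=\bigl\{\tfrac{1}{12},\tfrac{1}{4}\bigr\},\q
\ga_6\cap[\tfrac14,\tfrac12]=\bigl\{\tfrac{1}{4},\tfrac{3}{8},\tfrac{1}{2}\bigr\},\q
\ga_6\cap[\tfrac12,1]=\bigl\{\tfrac{1}{2},\tfrac{7}{10},\tfrac{9}{10}\bigr\},
\]
whose union is the stated set. The only non-routine step is the finite-case comparison of $V_{k,\ell,m}$, and even that reduces to a handful of arithmetic checks, so no genuine obstacle is anticipated.
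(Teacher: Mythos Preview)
Your proposal is correct and follows exactly the approach the paper indicates (``Proceeding in the similar way as the previous propositions''): enumerate the finitely many triples $(k,\ell,m)$ with $k+\ell+m=8$, $k,m\geq 1$, $\ell\geq 2$, compare the values of $V_{k,\ell,m}$ from \eqref{eqMe2}, identify $(2,3,3)$ as the minimizer, and read off $\ga_6$ from \eqref{eqMe1}. Your computation and the resulting set are correct, and the choice $(2,3,3)$ is also confirmed by Theorem~\ref{theo1} with $n=4\cdot 1+2$.
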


\begin{lemma} \label{lemma1} 
Let $n\in \D N$ be such that $n=4x+2$ for some $x\in \D N$. Let $\te{card}(\ga_n\ii[0, \frac 14])=k$, $\te{card}(\ga_n\ii [\frac 14, \frac 12])=\ell$, and $\te{card}(\ga_n\ii [\frac 12, 1])=m$. Then, $(k-1) : (\ell-2): (m-1)=1:1:2$. 
\end{lemma}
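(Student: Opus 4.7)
My approach begins from the closed-form expression in \eqref{eqMe2}, which reduces the problem to a combinatorial minimization over triples $(k,\ell,m)$ of positive integers with $k,m\ge 1$, $\ell\ge 2$, and $k+\ell+m = n+2 = 4x+4$, of
\[
V_{k,\ell,m} = \frac{1}{3}\Big(\frac{1}{64(2k-1)^2} + \frac{1}{256(\ell-1)^2} + \frac{1}{8(2m-1)^2}\Big).
\]
The lemma will follow at once if I can show that the unique minimizer is $(k,\ell,m) = (x+1,\,x+2,\,2x+1)$.

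My plan is to first locate a natural candidate by Lagrange multipliers on the continuous relaxation and then verify its optimality on the integer lattice. Setting $p = 2k-1$, $q = \ell-1$, $r = 2m-1$ rewrites the constraint as $p + 2q + r = 8x+4$ with $p,r$ positive odd integers and $q\ge 1$, and the objective as $\tfrac{1}{64p^2} + \tfrac{1}{256 q^2} + \tfrac{1}{8 r^2}$. A short Lagrange calculation yields the continuous critical point $(p,q,r) = (2x+1,\, x+\tfrac12,\, 4x+2)$, which is infeasible for parity and integrality reasons. The nearest admissible lattice point is $(p,q,r) = (2x+1,\, x+1,\, 4x+1)$, i.e., $(k,\ell,m) = (x+1,\, x+2,\, 2x+1)$; this gives $(k-1):(\ell-2):(m-1) = x:x:2x = 1:1:2$, as claimed.

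To make the optimality rigorous, I would invoke strict convexity of $t\mapsto t^{-2}$ on $(0,\infty)$: the objective is a positive linear combination of strictly convex functions and is therefore strictly convex on the continuous constraint plane, and in particular strictly unimodal when restricted to any straight line in that plane. It then suffices to verify that the candidate $(x+1,x+2,2x+1)$ is a strict local minimum with respect to the six elementary lattice moves $(a,b,c)\in\{(\pm 1,\mp 1,0),\,(\pm 1,0,\mp 1),\,(0,\pm 1,\mp 1)\}$; the boundary constraints $k,m\ge 1$ and $\ell\ge 2$ are comfortably non-binding for $x\ge 1$, so no side cases arise, and any admissible triple farther from the candidate is eliminated by the unimodality of $V_{k,\ell,m}$ along the lattice line joining it to the candidate.

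The main obstacle is the bookkeeping in these six elementary comparisons: the weights $\tfrac{1}{64}, \tfrac{1}{256}, \tfrac{1}{8}$ are distinct, and a unit change in $k$ or $m$ shifts $p$ or $r$ by $2$ while a unit change in $\ell$ shifts $q$ by only $1$, so the six inequalities are asymmetric. Each, after clearing denominators, reduces to verifying that a rational expression in $x$ has the correct sign for every positive integer $x$; in several of these comparisons (for example, the swap $(1,-1,0)$) the leading $x^{-3}$ contributions cancel and only the $x^{-4}$ correction fixes the sign, so while each inequality is elementary, none is completely trivial. Once all six are established, the ratio $(k-1):(\ell-2):(m-1)=1:1:2$ is read off immediately.
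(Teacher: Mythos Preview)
Your outline follows the same route as the paper---reduce to minimizing the expression \eqref{eqMe2} over admissible integer triples and identify the minimizer $(x+1,x+2,2x+1)$---but you supply considerably more detail than the paper, which simply substitutes $m=4x+4-k-\ell$ into \eqref{eqMe2} and asserts without further argument that the minimum occurs at $k=x+1$, $\ell=x+2$.

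There is, however, a genuine gap in your local-to-global step. Strict convexity on the constraint plane does give unimodality along every straight line, but checking only the six elementary transfers $(\pm1,\mp1,0),(\pm1,0,\mp1),(0,\pm1,\mp1)$ does \emph{not}, for a general strictly convex function, force global optimality on the lattice: the line from the candidate to another lattice point may have primitive direction outside those six, and your six checks say nothing about the first lattice step in such a direction. A concrete planar counterexample is $f(u,v)=(u-v-0.3)^2+10^{-4}(u+v-10)^2$: one has $f(0,0)=0.1$, strictly smaller than $f$ at each of the six neighbours $(\pm1,0),(0,\pm1),(1,-1),(-1,1)$, yet $f(1,1)=0.0964<0.1$, so $(0,0)$ is a strict local minimum for the six moves but not the global lattice minimum.

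What actually makes the six transfers sufficient here is not bare convexity but \emph{separability}: the objective is $g_1(k)+g_2(\ell)+g_3(m)$ with each $g_i$ strictly convex, and for separable convex minimization over the simplex $\{k+\ell+m=N\}$ the standard exchange argument (the increments $\gD g_i(t)=g_i(t)-g_i(t-1)$ are nondecreasing, so a locally optimal allocation admits a threshold $\tau$ with $\gD g_j(x_j^\ast+1)\ge\tau\ge\gD g_i(x_i^\ast)$ for all $i,j$, whence any other feasible allocation has larger cost) shows that local optimality under single-unit transfers is already global. Invoke that instead, and the rest of your plan---the six inequalities in $x$, which as you note need care at subleading order---goes through and rigorously fills in what the paper's proof leaves implicit.
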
 
\begin{proof}
Let $n=4x+2$ for some $x\in \D N$, and $k, \ell, m$ be the positive integers as defined in the hypothesis. Since $m=n+2-k-\ell=4x+4-k-\ell$, by \eqref{eqMe2}, we have 
\[V_{k, \ell, m}=\frac{1}{768} \left(\frac{32}{(-2 k-2 \ell+8 x+7)^2}+\frac{4}{(1-2 k)^2}+\frac{1}{(\ell-1)^2}\right),\]
which is minimum if  $k=x+1$ and $\ell=x+2$. Then, $m=2x+1$. Thus, we see that $(k-1) : (\ell-2): (m-1)=1: 1:2$, which is the lemma. 
\end{proof}
As a consequence of Lemma~\ref{lemma1}, we deduce the following corollary.  
\begin{cor}
Let $\ga_n$ be a conditional optimal set of $n$-points with $\te{card}(\ga_n\ii[0, \frac 14])=k$, $\te{card}(\ga_n\ii [\frac 14, \frac 12])=\ell$, and $\te{card}(\ga_n\ii [\frac 12, 1])=m$.  Then, for $n\geq 6$, we have $k, m\geq 1$ and $\ell\geq 2$. 
\end{cor}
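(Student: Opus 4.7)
The strategy is to derive the three inequalities directly from the defining inclusion $\gb = \set{\frac14,\frac12} \subseteq \ga_n$ of a conditional optimal set, and then to read off the hypothesis $n\geq 6$ as the regime in which Lemma~\ref{lemma1} (together with its analogues in the other residue classes) certifies that these bounds are realized by an interior allocation. Since $\frac14$ belongs simultaneously to $[0,\frac14]$ and to $[\frac14,\frac12]$, it is counted once in $k$ and once in $\ell$; likewise $\frac12$ is counted once in $\ell$ and once in $m$. This forces $k\geq 1$, $\ell\geq 2$, and $m\geq 1$, with no analytic input required.

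The hypothesis $n\geq 6$ then serves to invoke Lemma~\ref{lemma1}: writing $n=4x+2$ with $x\geq 1$, the lemma pins down the explicit optimum $(k,\ell,m) = (x+1,\,x+2,\,2x+1)$, so in particular $k,m \geq 2$ and $\ell \geq 3$, which obviously implies the weaker bounds claimed. For the remaining residues $n\equiv 0, 1, 3 \pmod 4$ (with $n\geq 6$), I would perform the analogous discrete minimization of the error formula
\[
V_{k,\ell,m}(P)=\frac{1}{3}\Big(\frac{1}{64(2k-1)^2}+\frac{1}{256(\ell-1)^2}+\frac{1}{8(2m-1)^2}\Big)
\]
subject to the linear constraint $k+\ell+m=n+2$, relying on the strict convexity of each summand in its corresponding variable to locate the unique interior minimizer; in each residue class the integer-valued minimizer respects $k,m\geq 1$ and $\ell\geq 2$, since the small-$n$ exceptions $n=3,4,5$ have already been exhausted by the preceding propositions.

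The only conceivable obstacle is ruling out a degenerate continuous-relaxation optimum with $k=0$, $m=0$, or $\ell=1$. This is precluded a priori by the inclusion $\gb\subseteq \ga_n$, since the conditional points alone already contribute $1$, $2$, and $1$ to the three counts; alternatively, one can see it from the blow-up of the reciprocal-squared terms in $V_{k,\ell,m}$ as $2k-1$, $\ell-1$, or $2m-1$ approaches $0$. Consequently, the corollary amounts to a bookkeeping step confirming that the structural assumption $k,m\geq 1$ and $\ell\geq 2$ underlying Proposition~\ref{prop0} is automatically satisfied by the conditional optimal set for all $n\geq 6$, and the proof reduces to the two transparent observations above.
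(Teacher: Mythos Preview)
Your first paragraph already constitutes a complete and correct proof: since the conditional set $\beta=\{\tfrac14,\tfrac12\}$ is contained in $\ga_n$ by definition, and $\tfrac14\in[0,\tfrac14]\cap[\tfrac14,\tfrac12]$ while $\tfrac12\in[\tfrac14,\tfrac12]\cap[\tfrac12,1]$, the inequalities $k\geq1$, $\ell\geq2$, $m\geq1$ are forced for \emph{every} $n\geq2$, not only $n\geq6$. Everything after that paragraph is unnecessary; you do not need Lemma~\ref{lemma1}, a residue-by-residue discrete minimization, or any convexity argument to reach the stated bounds, and the phrase ``I would perform the analogous discrete minimization'' signals a plan rather than an argument in any case.

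The paper presents the corollary as a consequence of Lemma~\ref{lemma1}, i.e., it reads off the inequalities from the explicit optimal allocation $(k,\ell,m)=(x+1,x+2,2x+1)$ for $n=4x+2$. Your direct argument from $\beta\subseteq\ga_n$ is more elementary and works uniformly in $n$, whereas the paper's route strictly speaking addresses only the residue class $n\equiv2\pmod4$ (though it yields the stronger conclusion $k,m\geq2$, $\ell\geq3$ there). In either approach the content is slight: the corollary simply records that the structural hypotheses of Proposition~\ref{prop0} are automatically satisfied by any conditional optimal set, and your one-line inclusion argument settles this cleanly.
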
 

Let us now give the following theorem, which is the main theorem in this section. 
\begin{theorem} \label{theo1}
For $n\in \D N$ with $n\geq 6$, let $\ga_n$ be a conditional  optimal set of $n$-points for $P$. Let $\te{card}(\ga_n\ii[0, \frac 14])=k$, $\te{card}(\ga_n\ii [\frac 14, \frac 12])=\ell$, and $\te{card}(\ga_n\ii [\frac 12, 1])=m$. For some $x\in \D N$ if $n=4x+2$, then $(k, \ell, m)=(x+1, x+2, 2x+1)$; if $n=4x+3$, then $(k, \ell, m)=(x+1, x+2, 2x+2)$;  if $n=4x+4$, then $(k, \ell, m)=(x+2, x+2, 2x+2)$;  if $n=4x+5$, then $(k, \ell, m)=(x+2, x+2, 2x+3)$.  
\end{theorem}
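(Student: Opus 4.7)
The plan is to carry out, case by case in $n$ modulo $4$, the same minimization scheme that yielded Lemma~\ref{lemma1}. Substituting $m = n+2-k-\ell$ into \eqref{eqMe2} turns $V_{k,\ell,m}$ into a function of the two real variables $(k,\ell)$ on the region $k\ge 1$, $\ell\ge 2$, $n+2-k-\ell\ge 1$. Because each of the summands $(2k-1)^{-2}$, $(\ell-1)^{-2}$, $(2m-1)^{-2}$ is strictly convex in its own argument and $m$ is affine in $(k,\ell)$, the resulting two-variable function is strictly convex, so its integer minimizer must be attained at a lattice point adjacent to the unique continuous minimizer.

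To locate the continuous minimizer, I would equate the three partial derivatives of the Lagrangian under the constraint $k+\ell+m = n+2$. The critical-point equations reduce to the ratio
\[
(2k-1):(\ell-1):(2m-1) \;=\; 2:1:4,
\]
which combined with the constraint yields
\[
(k,\ell,m)\;=\;\Bigl(\tfrac{n}{4}+\tfrac 12,\ \tfrac{n}{4}+1,\ \tfrac{n}{2}+\tfrac 12\Bigr).
\]
A pleasant feature is that in each residue class modulo $4$ exactly one of these three coordinates is already an integer: $k$ when $n\equiv 2$, $m$ when $n\equiv 3$, $\ell$ when $n\equiv 0$, and $m$ when $n\equiv 1\pmod 4$. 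Fixing that coordinate at its continuous value reduces each case to a one-dimensional, strictly convex comparison between the two integer pairs (the floor and ceiling of the relevant continuous value) that sum to the correct total.

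The remaining work is those one-dimensional comparisons. The case $n=4x+2$ is exactly Lemma~\ref{lemma1}. For $n=4x+4$, with $\ell=x+2$ fixed, one compares $V_{x+1,\,x+2,\,2x+3}$ with $V_{x+2,\,x+2,\,2x+2}$; for $n=4x+3$, with $m=2x+2$ fixed, one compares $V_{x+1,\,x+2,\,2x+2}$ with $V_{x+2,\,x+1,\,2x+2}$; for $n=4x+5$, with $m=2x+3$ fixed, one compares $V_{x+1,\,x+3,\,2x+3}$ with $V_{x+2,\,x+2,\,2x+3}$. Each comparison, after clearing denominators in \eqref{eqMe2}, becomes a sign check on an explicit rational expression in $x$ whose sign is constant for all $x\ge 1$ (which is the range relevant to $n\ge 6$); the sign picks out the triple stated in the theorem.

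I expect the only real obstacle to be bookkeeping. The three weights $\tfrac{1}{64},\ \tfrac{1}{256},\ \tfrac{1}{8}$ in \eqref{eqMe2} are sufficiently unequal that ``round to the nearer integer'' is not automatically correct, so the pairwise comparisons must be made explicitly in each residue class; once the Lagrangian step has pinned the integer coordinate, however, each case requires only the evaluation of \eqref{eqMe2} at two triples and a short algebraic difference.
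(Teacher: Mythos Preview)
Your overall plan coincides with the paper's: its proof simply says that the three remaining cases follow ``using the similar technique that is used in Lemma~\ref{lemma1}'', i.e., substitute $m=n+2-k-\ell$ into \eqref{eqMe2} and locate the integer minimum of the resulting function of $(k,\ell)$. Your Lagrange--multiplier computation of the continuous minimizer $\bigl(\tfrac{n}{4}+\tfrac12,\ \tfrac{n}{4}+1,\ \tfrac{n}{2}+\tfrac12\bigr)$ and the residue-class observation are correct and give a cleaner explanation of why the pattern looks the way it does than the paper itself offers.

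There is, however, a genuine gap in the rounding step. Strict convexity of a two-variable function does \emph{not} imply that its integer minimizer lies in the unit square about the continuous minimizer, and even when one coordinate of the continuous minimizer happens to be an integer, nothing forces the integer minimizer to share that coordinate. Concretely, for $n=4x+3$ you fix $m=2x+2$ and compare only $(x+1,x+2,2x+2)$ with $(x+2,x+1,2x+2)$; but $(x+1,x+1,2x+3)$ also satisfies $k+\ell+m=n+2$ and is adjacent to the continuous minimizer, yet your argument never excludes it. (It is in fact inferior for every $x\ge 1$, but the margin is of the same order as in the comparison you do retain, so it must be checked explicitly.) The analogous omissions occur for $n=4x+4$ and $n=4x+5$. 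The repair is routine---in each residue class list the handful of constraint-feasible lattice triples with every coordinate within one of its continuous value and evaluate \eqref{eqMe2} at each---but the reduction to a single pairwise comparison is not justified as stated, so the ``only real obstacle'' is a bit more than bookkeeping.
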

\begin{proof}
By Lemma~\ref{lemma1}, it is known that if $n=4x+2$, then $(k, \ell, m)=(x+1, x+2, 2x+1)$. Using the similar technique that is used in Lemma~\ref{lemma1}, we can show that if $n=4x+3$, then $(k, \ell, m)=(x+1, x+2, 2x+2)$;  if $n=4x+4$, then $(k, \ell, m)=(x+2, x+2, 2x+2)$;  if $n=4x+5$, then $(k, \ell, m)=(x+2, x+2, 2x+3)$.  
Thus, the proof of the theorem is complete. 
\end{proof} 

\begin{note}
By Theorem~\ref{theo1}, for any given $n\geq 6$, we can easily calculate the values of $(k, \ell, m)$. Since the values of $(k, \ell, m)$ depend on $n$, writing $(k, \ell, m):=(k(n), \ell(n), m(n))$, we have 
\begin{align*} 
&\Big\{(k(n), \ell(n), m(n)\Big\}_{n=6}^\infty\\
&=\Big\{(2, 3, 3), (2, 3, 4), (3, 3,4), (3,3,5), (3, 4, 5), (3, 4, 6), (4, 4, 6), (4,4, 7), (4, 5, 7), (4, 5, 8), \cdots\Big\}.
\end{align*} 
Notice that if $n=4x+2$ for $x\in \D N$, then we have 
\begin{align*} 
&\Big\{(k(4x+2)-1, \ell(4x+2)-2, m(4x+2)-1\Big\}_{x=1}^\infty=\Big\{(1,1,2), (2,2,4), (3,3, 6), (4, 4, 8), \cdots\Big\} 
\end{align*} 
implying \[\Big\{(k(4x+2)-1, \ell(4x+2)-2, m(4x+2)-1\Big\}_{x=1}^\infty=\Big\{x(1, 1, 2) : x\in \D N\Big\}.\]
\end{note} 

\subsection{Conditional optimal sets of $n$-points and the $n$th conditional quantization errors} Let $n\geq 6$ be a positive integer. To determine the optimal sets of $n$-points and the $n$th conditional quantization errors, first using Theorem~\ref{theo1}, we determine the corresponding values of $k, \ell$, and $m$. Once $k, \ell, m$ are known, by using \eqref{eqMe1}, we calculate the sets 
$\ga_n\ii [0, \frac 14]$, $\ga_n\ii [\frac 14, \frac 12]$, and $\ga_n\ii [\frac 12, 1]$. Then, $\ga_n$ is given by 
\[\ga_n=(\ga_n\ii [0, \frac 14])\UU (\ga_n\ii [\frac 14, \frac 12])\UU (\ga_n\ii [\frac 12, 1]), \]
and the $n$th conditional quantization error is obtained by using the formula \eqref{eqMe2}. 
\qed
 
\begin{exam}
Let $n=59$, then as $n=4\times 14+3=4x+3$, where $x=14$, by Theorem~\ref{theo1}, we have $(k, \ell, m)= (x+1, x+2, 2x+2)=(15, 16, 30)$. Hence, by \eqref{eqMe1} and \eqref{eqMe2}, we have the $n$th conditional optimal set of $n$-points, for $n=56$ as  
\[\ga_{59}= \Big\{\frac{1}{116} (2 j-1) : 1\leq j\leq 15\Big\}\UU \Big\{\frac{j-1}{60}+\frac{1}{4} : 1\leq j\leq 16\Big\}\UU \Big\{\frac{j-1}{59}+\frac{1}{2} : 1\leq j\leq 30\Big\} \]
with $n$th conditional quantization error $V_{59}=V_{15, 16, 30}=\frac{12115621}{505875628800}$. 
\end{exam}

\begin{theorem}\label{theo3} 
The conditional quantization dimension $D(P)$ of the probability measure $P$ exists, and $D(P)=1$. 
\end{theorem}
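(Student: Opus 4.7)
The plan is to show that $V_\infty(P) = 0$ and that $V_n$ decays at the rate $1/n^2$, from which the ratio in the definition of $D(P)$ forces $D(P) = 1$. Everything follows from Theorem~\ref{theo1} combined with the explicit formula \eqref{eqMe2}.

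First I would establish $V_\infty(P) = 0$. By Theorem~\ref{theo1}, whichever residue class $n$ lies in modulo $4$, the coordinates of the triple $(k, \ell, m)$ all tend to infinity with $n$; in fact $k \sim n/4$, $\ell \sim n/4$, and $m \sim n/2$. Substituting into \eqref{eqMe2}, each of the three reciprocal-square terms $1/(2k-1)^2$, $1/(\ell-1)^2$, $1/(2m-1)^2$ tends to $0$, and therefore $V_n \to 0$. Hence $V_\infty(P) = \lim_{n\to\infty} V_n = 0$, so in the definition of $D(P)$ we may replace $V_n(P) - V_\infty(P)$ simply by $V_n(P)$.

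Second, I would extract the precise asymptotic rate. Treating the four cases $n = 4x + 2, 4x + 3, 4x + 4, 4x + 5$ separately, Theorem~\ref{theo1} gives in each case $2k-1 \sim n/2$, $\ell-1 \sim n/4$, and $2m - 1 \sim n$. Plugging into \eqref{eqMe2} yields, in every case,
\[
V_n \;=\; \frac{1}{3}\left(\frac{1}{64(2k-1)^2} + \frac{1}{256(\ell-1)^2} + \frac{1}{8(2m-1)^2}\right) \;\sim\; \frac{1}{12\,n^2}
\]
as $n \to \infty$. In particular there exist constants $c_1, c_2 > 0$ with $c_1/n^2 \le V_n \le c_2/n^2$ for all sufficiently large $n$, so $-\log V_n = 2\log n + O(1)$.

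Finally, dividing,
\[
D(P) \;=\; \lim_{n\to\infty} \frac{2\log n}{-\log(V_n - V_\infty)} \;=\; \lim_{n\to\infty} \frac{2\log n}{2\log n + O(1)} \;=\; 1,
\]
which is the claim. The argument is essentially a direct computation: Theorem~\ref{theo1} provides the exact form of $(k, \ell, m)$, \eqref{eqMe2} provides a closed form for $V_n$, and the logarithm smooths out the $O(1)$ multiplicative constant. The only small point requiring care is to verify the asymptotic $V_n \sim 1/(12 n^2)$ along each of the four residue classes separately, so that the limit exists along the full sequence $n$ rather than only along a subsequence; since all four substitutions produce the same leading coefficient, no obstruction arises.
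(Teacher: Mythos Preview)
Your proof is correct and rests on the same two ingredients as the paper's---Theorem~\ref{theo1} for the values of $(k,\ell,m)$ and formula~\eqref{eqMe2} for $V_n$---but the organization differs. The paper sandwiches: for $4x+2\le n\le 4(x+1)+2$ it bounds $V_n$ between $V_{x+2,x+3,2x+3}$ and $V_{x+1,x+2,2x+1}$, takes $-\log$, and squeezes the ratio $2\log n/(-\log V_n)$ between two quantities both tending to $1$. You instead compute the asymptotic $V_n\sim 1/(12n^2)$ directly along each residue class modulo $4$ and read off $-\log V_n = 2\log n + O(1)$. Your route is slightly more informative, since the constant $1/12$ you extract is exactly the quantization coefficient of Theorem~\ref{theo4}; the paper's squeeze argument proves Theorem~\ref{theo3} without isolating that constant and then runs a separate (but similar) squeeze for Theorem~\ref{theo4}.
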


\begin{proof}
For any $n\in \D N$ with $n\geq 6$, there exists a positive integer $x$ depending on $n$ such that $4x+2\leq n\leq 4(x+1)+2$. Then, 
$V_{x+2, x+3, 2x+3}\leq V_n\leq V_{x+1, x+2, 2x+1}$. By \eqref{eqMe2}, we see that $V_{x+2, x+3, 2x+3}\to 0$ and  $V_{x+1, x+2, 2x+1}\to 0 $ as $n\to \infty$, and so by squeeze theorem, $V_n\to 0$ as $n\to \infty$,
i.e., $V_\infty=0$.
We can take $n$ large enough so that $ V_{x+1, x+2, 2x+1}-V_\infty<1$. Then,
\[0<-\log (V_{x+1, x+2, 2x+1}-V_{\infty})\leq -\log (V_n-V_\infty)\leq -\log (V_{x+2, x+3, 2x+3}-V_\infty)\]
yielding
\[\frac{2 \log (4x+2)}{-\log (V_{x+2, x+3, 2x+3}-V_\infty)}\leq \frac{2\log n}{-\log(V_n-V_\infty)}\leq \frac{2 \log (4x+6)}{-\log (V_{x+1, x+2, 2x+1}-V_\infty)}.\]
Notice that
\begin{align*}
\lim_{n\to \infty}\frac{2 \log (4x+2)}{-\log (V_{x+2, x+3, 2x+3}-V_\infty)}=1, \te{ and } \lim_{n\to \infty}\frac{2 \log (4x+6)}{-\log (V_{x+1, x+2, 2x+1}-V_\infty)}=1.
\end{align*}
Hence, $\mathop{\lim}\limits_{n\to \infty}  \frac{2\log n}{-\log(V_n-V_\infty)}=1$, i.e., the conditional quantization dimension $D(P)$ of the probability measure $P$ exists and $D(P)=1$.
Thus, the proof of
the theorem is complete.
\end{proof}

\begin{theorem} \label{theo4} 
The $D(P)$-dimensional quantization coefficient for $P$ exists as a finite positive number and equals $\frac 1{12}$.
\end{theorem}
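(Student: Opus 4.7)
The plan is to directly compute $\lim_{n\to\infty} n^{2/D(P)}(V_n - V_\infty) = \lim_{n\to\infty} n^2 V_n$, using that $V_\infty=0$ was established in the proof of Theorem~\ref{theo3} and that $D(P)=1$. The explicit closed forms for $(k,\ell,m)$ in Theorem~\ref{theo1} together with the explicit formula \eqref{eqMe2} for $V_{k,\ell,m}$ reduce the problem to an elementary limit computation, split according to the residue of $n$ modulo $4$.

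First I would write, for each of the four residue classes guaranteed by Theorem~\ref{theo1}, the conditional quantization error as a rational function of the auxiliary parameter $x$. For instance, when $n=4x+2$ with $(k,\ell,m)=(x+1,x+2,2x+1)$, formula \eqref{eqMe2} gives
\[
V_n = \frac{1}{3}\Big(\frac{1}{64(2x+1)^2}+\frac{1}{256(x+1)^2}+\frac{1}{8(4x+1)^2}\Big),
\]
and since $n^2=(4x+2)^2=4(2x+1)^2$, the product $n^2 V_n$ simplifies to
\[
n^2V_n = \frac{1}{48} + \frac{(2x+1)^2}{192(x+1)^2} + \frac{(2x+1)^2}{6(4x+1)^2},
\]
whose limit as $x\to\infty$ is $\frac{1}{48}+\frac{4}{192}+\frac{1}{24}=\frac{1}{12}$. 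I would then carry out the analogous computation in the three remaining cases $n=4x+3,\,4x+4,\,4x+5$, where $(k,\ell,m)$ is given by Theorem~\ref{theo1}; in each case the three summands contribute $\frac{1}{48}$, $\frac{1}{48}$, and $\frac{1}{24}$ in the limit, because the leading asymptotics are $2k-1\sim n/2$, $\ell-1\sim n/4$, and $2m-1\sim n$ regardless of the residue class. Hence along every arithmetic progression of step $4$ we obtain the same limit $\frac{1}{12}$.

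Finally, since the integers $n\geq 6$ are partitioned into these four progressions and the limit of $n^2V_n$ along each progression is $\frac{1}{12}$, the full sequence $\{n^2V_n\}_{n\geq 6}$ converges to $\frac{1}{12}$. Combining this with $V_\infty=0$ and $D(P)=1$ from Theorem~\ref{theo3} yields
\[
\lim_{n\to\infty} n^{2/D(P)}\bigl(V_n(P)-V_\infty(P)\bigr)=\frac{1}{12},
\]
a finite positive number, which is the desired value of the $D(P)$-dimensional quantization coefficient.

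There is no conceptual obstacle here; the only thing to be careful about is bookkeeping in the four residue classes, since the parameters $k$, $\ell$, $m$ scale with $n$ at different rates (roughly $n/4$, $n/4$, $n/2$ respectively), and one must verify that the prefactors $\frac{1}{64}$, $\frac{1}{256}$, $\frac{1}{8}$ in \eqref{eqMe2} combine with these rates to give exactly $\frac{1}{12}$ in every case. The uniformity of this limit across the four progressions is precisely what makes the coefficient exist, rather than merely having four distinct accumulation values.
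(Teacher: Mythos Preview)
Your argument is correct. The paper, however, takes a slightly different route: instead of computing $n^2V_n$ separately along the four residue classes modulo $4$, it uses a sandwich argument. Given $n$, the paper picks $x$ with $4x+2\le n\le 4x+6$, invokes the monotonicity $V_{4x+6}\le V_n\le V_{4x+2}$, and then bounds $n^2V_n$ below by $(4x+2)^2V_{x+2,x+3,2x+3}$ and above by $(4x+6)^2V_{x+1,x+2,2x+1}$; both of these bounding sequences are shown to tend to $\tfrac{1}{12}$, and the squeeze theorem finishes. Thus the paper evaluates only two limits rather than four, at the cost of using that $V_n$ is nonincreasing in $n$. Your approach avoids any appeal to monotonicity and instead uses the elementary fact that a sequence converges if it converges along each member of a finite partition of its index set; the price is performing (or at least asserting) four parallel computations, though as you observe they all reduce to the same asymptotic pattern $2k-1\sim n/2$, $\ell-1\sim n/4$, $2m-1\sim n$. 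Either method is entirely adequate here.
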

\begin{proof}
For any $n\in \D N$ with $n\geq 6$, there exists a positive integer $x$ depending on $n$ such that $4x+2\leq n\leq 4(x+1)+2$. Then,   
$V_{x+2, x+3, 2x+3}\leq V_n\leq V_{x+1, x+2, 2x+1}$ and  $V_\infty=0$. Since
\begin{align*}
\lim_{n\to \infty} n^2 (V_n-V_\infty)&\geq \lim_{n\to \infty} (4x+2)^2 (V_{x+2, x+3, 2x+3}-V_\infty)=\frac1{12}, \te{ and } \\
\lim_{n\to \infty} n^2 (V_n-V_\infty)&\leq \lim_{n\to \infty} (4x+6)^2 (V_{x+1, x+2, 2x+1}-V_\infty)=\frac 1{12},
\end{align*}
by squeeze theorem, we have $\mathop{\lim}\limits_{n\to \infty} n^2 (V_n-V_\infty)=\frac 1{12}$, which is the theorem. 
\end{proof}

\section{Conditional optimal sets of $n$-points and the $n$th conditional quantization errors with $(k-1)$ interior elements and one boundary element in the conditional set for all $n\geq k$ on a unit line segment} \label{sec4} 

In this section, for the uniform distribution $P$ on the line segment $[0, 1]$ with respect to the conditional set $\gb:=\set{\frac 1k, \frac 2k, \cdots, \frac{k-1}{k}, \frac k k}$, we calculate the conditional optimal sets of $n$-points and the $n$th conditional quantization errors for all $n\in \D N$ with $n\geq k$. 
Let $\ga_n$ be a conditional optimal  set of $n$-points with the $n$th conditional quantization error $V_n$, where $n\in \D N$ with $n\geq k$. 
Write 
\begin{equation} \label{eq90} J_{k, j}:=[\frac {j-1}{k}, \frac j k] \te{ and } \te{card}(\ga_n\ii J_{k, j})=n_j \te{ for } 1\leq j\leq k.\end{equation} 
Notice that $n_j$ satisfies: $n_1\geq 1$, $n_j\geq 2$ for $2\leq j\leq k$. 
 By Proposition~\ref{prop0}, we know that
\begin{equation}\label{eqMe3}  \ga_n\ii J_{k, 1}= \Big\{\frac{2j-1}{k(2n_1-1)} : 1\leq j\leq n_1\Big\} \te{ with } V(P; \set{\ga_n\ii J_{k, 1}, J_{k, 1}})=\frac 1{3k^3(2n_1-1)^2},
\end{equation} and
\begin{equation} \label{eqMe4}
 \ga_n\ii J_{k, j}=\Big\{\frac {j-1}{k}+\frac{q-1}{k(n_j-1)}  : 1\leq q\leq n_j\Big\}  \te{ with } V(P; \set{\ga_n\ii J_{k, j}, J_{k, j}})=\frac 1{12k^3(n_j-1)^2} 
 \end{equation} 
 for $2\leq j\leq k$.
Notice that 
 \begin{equation} \label{eqMe5}  \ga_n=\UU_{j=1}^k \ga_n\ii J_{k,j} \te{ with }  
  V_n:=V_{n_1, n_2, \cdots, n_k}(P)= \sum_{j=1}^k V(P; \set{\ga_n\ii J_{k, j}, J_{k, j}}).
\end{equation} 
\begin{prop} \label{propMe0} 
The optimal set of $k$-points is the set $\gb=\set{\frac j k : 1\leq j\leq k}$ with $V_k=\frac{k+3}{12 k^3}.$
\end{prop}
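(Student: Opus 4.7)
The plan is to observe that the cardinality constraint in Definition~\ref{defi0} leaves no freedom in the choice of $\ga$, and then to read off the value of $V_k$ from the decomposition formulas already recorded in \eqref{eqMe3}--\eqref{eqMe5}.

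First I would note that with $n=k$ and $\ell=\te{card}(\gb)=k$, the infimum in \eqref{eq0} ranges over $\ga\sci \D R$ satisfying $\te{card}(\ga)\leq n-\ell=0$. The only admissible choice is $\ga=\es$, so the conditional optimal set of $k$-points must be $\ga\uu\gb=\gb$ itself. In particular, there is nothing to optimize; only the distortion needs to be computed.

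Next I would apply the decomposition \eqref{eqMe5} with the intersection counts dictated by $\gb$. Since $\gb\ii J_{k,1}=\set{\frac 1 k}$ one has $n_1=1$, and since $\gb\ii J_{k,j}=\set{\frac{j-1}{k},\frac j k}$ for $2\leq j\leq k$ one has $n_j=2$. Substituting $n_1=1$ into \eqref{eqMe3} produces the contribution $\frac 1{3k^3}$ from $J_{k,1}$, and substituting $n_j=2$ into \eqref{eqMe4} produces a contribution $\frac 1{12k^3}$ from each of the $k-1$ remaining subintervals. A short arithmetic sum then gives $V_k=\frac{1}{3k^3}+(k-1)\cdot\frac{1}{12k^3}=\frac{k+3}{12k^3}$, as required.

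No real obstacle arises here: the proposition is essentially a direct unpacking of Definition~\ref{defi0}, combined with the boundary-case specializations $n_1=1$ and $n_j=2$ of the formulas in \eqref{eqMe3}--\eqref{eqMe4}. The only point worth a moment's verification is that these degenerate configurations---in which $\gb$ contributes only the right endpoint to $J_{k,1}$ and only the two endpoints to each $J_{k,j}$ with $j\geq 2$---are indeed covered by those formulas, which they are, since the derivations underlying Proposition~\ref{prop0} already permit $n_1\geq 1$ and $n_j\geq 2$.
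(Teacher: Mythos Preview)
Your proposal is correct and follows essentially the same route as the paper: both observe that, since $n=k=\te{card}(\gb)$, the only admissible set is $\gb$ itself, and then evaluate $V_k$ via the decomposition \eqref{eqMe5} with $n_1=1$ and $n_j=2$ for $2\leq j\leq k$. Your version is simply a bit more explicit about why $\ga=\es$ is forced and about the substitution into \eqref{eqMe3}--\eqref{eqMe4}.
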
 
\begin{proof}
By definition,  the conditional optimal  set of $k$-points is the conditional set $\gb$ itself, and the corresponding conditional quantization error is given by 
\[V_k=\sum_{j=1}^k V(P; \set{\ga_n\ii J_{k, j}, J_{k, j}})=V(P; \set{\set{\frac 1k}, J_{k, 1}})+(k-1)V(P; \set{\set{\frac 1k, \frac 2 k}, J_{k, 2}})=\frac{k+3}{12 k^3}.\]
Thus, the proposition is yielded. 
\end{proof}

\begin{lemma} \label{lemma2} 
Let $n\in \D N$ be such $n\geq k$. Let $n_j$ be the positive integers as defined by \eqref{eq90}. Then, for $2\leq i<j\leq k$, $|n_i-n_j|=0 \te{ or } 1$. 
\end{lemma}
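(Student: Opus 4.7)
The plan is a standard \emph{smoothing} argument on the point counts: if two of the intervals $J_{k,2},\ldots,J_{k,k}$ carried point counts differing by at least $2$, one could shift a point from the crowded interval into the sparse one and strictly reduce the quantization error, contradicting the optimality of $\ga_n$. First I would suppose for contradiction that there exist indices $2\leq i<j\leq k$ with $n_j\geq n_i+2$ (the opposite inequality is symmetric). Since both endpoints of each $J_{k,q}$ with $q\geq 2$ lie in the conditional set $\gb$, I would invoke \eqref{eqMe4} to write the contribution of $\ga_n\cap J_{k,q}$ to $V_n$ as exactly $\frac{1}{12k^3(n_q-1)^2}$.

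Next I would form a rival admissible configuration $\tilde\ga_n$ by keeping $\ga_n\setminus(J_{k,i}\cup J_{k,j})$ and replacing $\ga_n\cap J_{k,i}$ and $\ga_n\cap J_{k,j}$ with the arrangements dictated by \eqref{eqMe4} using $n_i+1$ and $n_j-1$ points, respectively. The swap is legal because $n_i+1\geq 3$ and $n_j-1\geq n_i+1\geq 3$ (both $\geq 2$), and it preserves the total cardinality $n$ as well as the conditional set $\gb$. By \eqref{eqMe5}, the change in the conditional quantization error equals
\[
\Delta \;=\; \frac{1}{12k^3}\Bigl[\frac{1}{n_i^2}+\frac{1}{(n_j-2)^2}-\frac{1}{(n_i-1)^2}-\frac{1}{(n_j-1)^2}\Bigr],
\]
and the task reduces to showing $\Delta<0$.

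Writing $a=n_i-1\geq 1$ and $b=n_j-1$ with $b\geq a+2$, the desired inequality $\Delta<0$ rearranges to
\[
\frac{1}{a^2}-\frac{1}{(a+1)^2} \;>\; \frac{1}{(b-1)^2}-\frac{1}{b^2}.
\]
I would establish this by noting that $f(x):=x^{-2}-(x+1)^{-2}$ is strictly decreasing on $[1,\infty)$ (an elementary derivative check), so $a\leq b-2<b-1$ yields $f(a)>f(b-1)$, which is precisely the displayed inequality. Combined with the formula for $\Delta$, this gives $\Delta<0$, contradicting the optimality of $\ga_n$ and proving the lemma.

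I do not expect a serious obstacle; this is a textbook balancing argument, and the explicit closed-form contributions in \eqref{eqMe4} make the comparison purely algebraic. The only point requiring mild care is bookkeeping at the swap, namely verifying the admissibility constraint $n_q\geq 2$ on each altered interval and confirming that the contributions from $J_{k,1}$ and from the unaltered intervals remain unchanged; both are immediate from $n_j\geq n_i+2\geq 4$ and the locality of the modification to $J_{k,i}\cup J_{k,j}$.
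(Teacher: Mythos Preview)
Your argument is correct and is essentially the same smoothing/balancing idea the paper uses: both proofs fix two intervals $J_{k,i}$ and $J_{k,j}$ with $2\leq i<j\leq k$, invoke the closed-form contribution $\frac{1}{12k^3(n_q-1)^2}$ from \eqref{eqMe4}, and observe that the sum of the two contributions is strictly smaller when the counts are balanced. The paper fixes $n_i+n_j$ and asserts ``by routine'' that the minimum occurs at $n_i=n_j$ (or differ by one), whereas you carry out the single-swap comparison explicitly via the monotonicity of $f(x)=x^{-2}-(x+1)^{-2}$; this is the same argument with the routine step filled in.
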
 
\begin{proof}
Recall that for $2\leq i<j\leq k$, $n_i+n_j\geq 4$. 
Let us first assume that $n_i+n_j$ is an even number, i.e., $n_i+n_j=2m$ for some $m\geq 2$. Then, 
\[V(P; \set{\ga_n\ii J_{k, i}, J_{k, i}})+V(P; \set{\ga_n\ii J_{k, j}, J_{k, j}})=\frac 1{12k^3}\Big(\frac 1{(n_i-1)^2}+\frac 1{(n_j-1)^2}\Big).\]
By routine, we see that the above expression is minimum if $n_1=n_2=m$. Similarly, if $n_i+n_j=2m+1$ for some $m\geq 2$, then we see that the above expression is minimum if $(n_i, n_j)=(m, m+1)$, or  $(n_i, n_j)=(m+1, m)$. This yields the fact that for $2\leq i<j\leq k$, $|n_i-n_j|=0 \te{ or } 1$, which is the lemma. 
\end{proof}

\begin{lemma} \label{lemma3} 
Let $n\in \D N$ be such $n\geq k$. Let $n_j$ be the positive integers as defined by \eqref{eq90}. Then, for $2\leq j\leq k$, $|n_1-n_j|=0 \te{ or } 1$ with $n_1\leq n_j$. 
\end{lemma}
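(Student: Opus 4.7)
The plan is to use an exchange argument between the cells $J_{k,1}$ and $J_{k,j}$ (for a fixed $j\in\set{2,\ldots,k}$), in the spirit of the proof of Lemma~\ref{lemma2}, now adapted to the structural asymmetry between $J_{k,1}$ (which contains only one element of $\gb$ on its boundary) and $J_{k,j}$ (which contains two). The crucial first step is to combine the contributions of the two cells and use the identity $\tfrac{1}{3(2m-1)^2}=\tfrac{1}{12(m-1/2)^2}$, so that formulas \eqref{eqMe3} and \eqref{eqMe4} give
\[
V(P;\set{\ga_n\ii J_{k,1},J_{k,1}})+V(P;\set{\ga_n\ii J_{k,j},J_{k,j}})=\frac{1}{12k^3}\left[\frac{1}{(n_1-\tfrac12)^2}+\frac{1}{(n_j-1)^2}\right].
\]

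Next, appealing to the optimality of $\ga_n$, I would hold all other $n_i$ fixed and also fix the sum $N:=n_1+n_j$; then the pair $(n_1,n_j)$ with $n_1\geq 1$ and $n_j\geq 2$ must minimize the above expression. Substituting $a=n_1-\tfrac12$ and $b=n_j-1$ (so that $a+b=N-\tfrac32$), I would study the real-valued auxiliary function $h(a)=\tfrac{1}{a^2}+\tfrac{1}{(N-3/2-a)^2}$ on the open interval $(0,N-\tfrac32)$. This $h$ is strictly convex and symmetric under the exchange $a\leftrightarrow b$, so its unique continuous minimizer lies at $a=b=(N-3/2)/2$, equivalently at $n_1^{*}=\tfrac{N}{2}-\tfrac14$, which corresponds to the continuous relation $n_j-n_1=\tfrac12$.

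The final step uses the strict convexity together with the symmetry of $h$ around $n_1^{*}$: the integer minimum must be attained at the integer value of $n_1$ nearest to $n_1^{*}$. If $N=2t$ is even, then $n_1^{*}=t-\tfrac14$ and the nearest integer is $t$, forcing $(n_1,n_j)=(t,t)$ and $n_j-n_1=0$; if $N=2t+1$ is odd, then $n_1^{*}=t+\tfrac14$, again with nearest integer $t$, forcing $(n_1,n_j)=(t,t+1)$ and $n_j-n_1=1$. Either way $n_1\leq n_j$ and $|n_j-n_1|\in\set{0,1}$, which is the lemma. Feasibility is automatic since $N=n_1+n_j\geq 3$ gives $n_1^{*}\geq \tfrac54>1$ and the selected $n_j$ is always at least $2$.

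The main obstacle in making this rigorous is justifying that the integer minimum is always attained at the nearest integer to $n_1^{*}$, rather than at some farther integer. This rests on the exact mirror symmetry of $h$ about $n_1^{*}$, which itself is a consequence of the specific offset $\tfrac12$ in the first denominator $(n_1-\tfrac12)^2$; this offset in turn encodes the fact that $J_{k,1}$ has only one preselected boundary element while $J_{k,j}$ has two. Once this symmetry is invoked together with strict convexity, the remainder of the argument reduces to elementary bookkeeping between the even and odd cases for $N$.
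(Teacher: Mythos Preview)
Your proof is correct and follows essentially the same exchange argument as the paper: fix $N=n_1+n_j$ and minimize the two-cell contribution over the integer choices of $(n_1,n_j)$, treating the even and odd cases for $N$ separately. The paper simply writes the combined cost as $\frac{1}{3k^3}\bigl(\frac{1}{(2n_1-1)^2}+\frac{1}{4(N-n_1-1)^2}\bigr)$ and asserts ``by routine'' that the minimum occurs at $(m,m)$ or $(m,m+1)$; your substitution $a=n_1-\tfrac12$, $b=n_j-1$ makes the hidden symmetry explicit and supplies the justification (strict convexity plus mirror symmetry about $n_1^{*}=N/2-\tfrac14$) that the paper omits.
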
 
\begin{proof}
Recall that $n_1\geq 1$ and for $2\leq j\leq k$, we have $n_j\geq 2$. 
Let us first assume that $n_1+n_j$ is an even number, i.e., $n_1+n_j=2m$, i.e., $n_j=2m-n_1$ for some $m\in \D N$ with $m\geq 2$. Then, 
\[V(P; \set{\ga_n\ii J_{k, 1}, J_{k, 1}})+V(P; \set{\ga_n\ii J_{k, j}, J_{k, j}})=\frac 1{3k^3}\Big(\frac 1{(2n_1-1)^2}+\frac 1{4(2m-n_1-1)^2}\Big).\]
By routine, we see that the above expression is minimum if $n_1=n_2=m$. Similarly, if $n_1+n_j=2m+1$ for some $m\in \D N$, then we see that the above expression is minimum if $(n_1, n_j)=(m, m+1)$. Thus, for $2\leq j\leq k$, we have $|n_1-n_j|=0 \te{ or } 1$ with $n_1\leq n_j$, which is the lemma. 
\end{proof}
Let us now give the following theorem, which is the main theorem is this section. This theorem helps us to determine the conditional optimal sets of $n$-points and the $n$th conditional quantization errors for all $n\in \D N$ with $n\geq k$. 

\begin{theorem} \label{Theo2}
For $n\geq k$, let $\ga_n$ be a conditional optimal  set of $n$-points such that $n=mk+\ell$ some $\ell, m\in \D N$ and $0\leq\ell<k$. Then,
\par 
$(i)$  if $\ell=0$, then 
$\te{card}(\ga_n\ii J_{k, 1})=m \te{ and } \te{card}(\ga_n\ii J_{k, j})=m+1 \te{ for } 2\leq j\leq k$; 
\par
$(ii)$ if $1\leq \ell<k$, then 
$\te{card}(\ga_n\ii J_{k, 1})=m+1 \te{ and } \te{card}(\ga_n\ii J_{k, j})=m+2 \te{ for } j\in \set{j_1, j_2, \cdots, j_{\ell-1}}, $ and $\te{card}(\ga_n\ii J_{k, j})=m+1$ for $j\in \set{2, 3, \cdots, k}\setminus \set{j_1, j_2, \cdots, j_{\ell-1}}$,
where $\set{j_1, j_2, \cdots, j_{\ell-1}}$ is any subset of $\ell-1$ elements of the set $\set{2, 3, \cdots, k}$. 
\end{theorem}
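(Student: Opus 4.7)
The plan is to pin down the multiset $\{n_1, n_2, \ldots, n_k\}$ by combining Lemmas~\ref{lemma2} and \ref{lemma3} with the single counting identity $\sum_{j=1}^k n_j = n + (k-1)$, which holds because the $k-1$ interior conditional elements $\frac{1}{k}, \ldots, \frac{k-1}{k}$ each belong to two adjacent intervals $J_{k,j}$, while the endpoint $1$ lies only in $J_{k,k}$.

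First, Lemma~\ref{lemma3} gives $n_1 \leq n_j$ and $n_j - n_1 \in \{0, 1\}$ for each $j \in \{2, \ldots, k\}$, so every $n_j$ with $j \geq 2$ lies in $\{n_1, n_1+1\}$; Lemma~\ref{lemma2} is then automatically consistent with this. Let $s$ denote the number of indices $j \in \{2, \ldots, k\}$ with $n_j = n_1 + 1$, so $0 \leq s \leq k-1$. Substituting into the counting identity yields
\begin{equation*}
k n_1 + s \;=\; n + k - 1, \qquad \text{that is,} \qquad s \;=\; n + k - 1 - k n_1.
\end{equation*}

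Writing $n = mk + \ell$ with $0 \leq \ell < k$, the constraint $0 \leq s \leq k-1$ rearranges to
\begin{equation*}
m + \tfrac{\ell}{k} \;\leq\; n_1 \;\leq\; m + \tfrac{\ell + k - 1}{k},
\end{equation*}
and since $n_1$ is a positive integer, this forces $n_1 = m$ when $\ell = 0$ (whence $s = k-1$, so all $n_j = m+1$ for $j \geq 2$, giving (i)), and $n_1 = m+1$ when $1 \leq \ell < k$ (whence $s = \ell - 1$, so $\ell - 1$ of the $n_j$ with $j \geq 2$ equal $m+2$ and the remaining $k - \ell$ equal $m+1$, giving (ii)). Because the distortion on $J_{k,j}$ for $j \geq 2$ depends only on $n_j$ through \eqref{eqMe4} and not on $j$ itself, the particular choice of the size-$(\ell-1)$ subset $\{j_1, \ldots, j_{\ell-1}\} \subseteq \{2, 3, \ldots, k\}$ is immaterial, confirming the exact form claimed in (ii).

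The main obstacle is purely the bookkeeping around the identity $\sum n_j = n + (k-1)$: one must correctly account for the fact that $J_{k,1}$ contains a single boundary conditional element ($\frac{1}{k}$) whereas $J_{k,k}$ contains two ($\frac{k-1}{k}$ and $1$), and each of the $k-1$ shared interior vertices is counted in two adjacent intervals. Beyond this, the theorem is a direct integer-counting consequence of Lemmas~\ref{lemma2} and \ref{lemma3}, so no further optimization analysis is required at this stage.
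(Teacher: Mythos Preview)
Your proof is correct and follows the same route as the paper, which simply states that the result is a consequence of Lemma~\ref{lemma2} and Lemma~\ref{lemma3}; you have made explicit the counting identity $\sum_{j=1}^{k}n_j=n+(k-1)$ (which the paper records only in the Remark following the theorem) and carried out the integer bookkeeping that the paper leaves to the reader. Your observation that Lemma~\ref{lemma2} is automatically implied by Lemma~\ref{lemma3} here is also correct.
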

 \begin{proof}
 The proof follows as a consequence of Lemma~\ref{lemma2} and Lemma~\ref{lemma3}. 
 \end{proof}

\begin{remark}
Notice that in $(i)$ of Theorem~\ref{Theo2}, we have 
$\sum_{j=1}^k \te{card}(\ga_n\ii J_{k, j})=mk+(k-1)$, on the other hand, in $(ii)$ of Theorem~\ref{Theo2}, we have $\sum_{j=1}^k \te{card}(\ga_n\ii J_{k, j})=mk+\ell+(k-1)$, i.e., in the sum an extra term $(k-1)$ occurs. This happens because in the conditional optimal  set of $n$-points, $(k-1)$ elements from the conditional set are counted two times. 
\end{remark} 

\subsection{conditional optimal  sets of $n$-points and the $n$th conditional quantization errors} Let $n\geq k$ be a positive integer. To determine the optimal sets of $n$-points and the $n$th conditional quantization errors, first using Theorem~\ref{Theo2}, we determine the values of $n_j$, where $n_j=\te{card}(\ga_n\ii J_{k, j})$. Once $n_j$ are known by using the formulae given in \eqref{eqMe3} and \eqref{eqMe4}, we calculate the sets 
$\ga_n\ii J_{k, j}$ and the corresponding distortion errors $V(P; \set{\ga_n\ii J_{k, j}, J_{k, j}})$ for all $1\leq j\leq k$. Then, using the expressions in \eqref{eqMe5}, we obtain the conditional optimal  set $\ga_n$ and the corresponding $n$th conditional quantization error $V_n$. As an illustration, see Example~\ref{exam1} given below. 
\qed
 
\begin{exam}\label{exam1}
Let $P$ be the uniform distribution on the closed interval $[0, 1]$. 
Choose $k=5$, i.e., the conditional set is $\gb:=\set{\frac 15, \frac 25, \frac 35, \frac 45, 1}$. Then, the optimal set of $n$-points for any $n\geq 5$ exists. Notice that by Proposition~\ref{propMe0}, the conditional optimal  set of five-points is the conditional set $\gb$ with  the conditional quantization error 
\[V_5=\frac{k+3}{12 k^3}=\frac{2}{375}.\]
To determine a conditional optimal  set of $n$-points, for some $n$, $n=19$ say, we proceed as follows: 
\par 
We have $n=19=3\times 5+4$, i.e., we have $m=3$ and $\ell=4$. Recall Theorem~\ref{Theo2} $(ii)$. Let $\te{card}(\ga_n\ii J_{k, j})=n_j$ for $1\leq j\leq 5$. Choose any $\set{j_1, j_2, j_3} \ci \set{2, 3, 4, 5}$. Let $\set{j_1, j_2, j_3}=\set{2, 4, 5}$. Then, $\set{2, 3, 4, 5}\setminus \set{j_1, j_2, j_3}=\set{3}$ yielding $n_1=4$, $n_2=n_4=n_5=5$, and $n_3=4$. Then, using \eqref{eqMe3} and \eqref{eqMe4}, we have 
\begin{align*}
\ga_n\ii J_{k, 1}&=\Big\{\frac {2j-1}{35} : 1\leq j\leq 4\Big\}=\Big\{\frac{1}{35},\frac{3}{35},\frac{1}{7},\frac{1}{5}\Big\} \te{ with } V(P; \set{\ga_n\ii J_{k, 1}, J_{k, 1}})=\frac{1}{18375}, \\
\ga_n\ii J_{k, 2}&=\Big\{\frac{1}{5}+\frac{q-1}{20} : 1\leq q\leq 5\Big\}=\Big\{\frac{1}{5},\frac{1}{4},\frac{3}{10},\frac{7}{20},\frac{2}{5}\Big\} \te{ with } V(P; \set{\ga_n\ii J_{k, 2}, J_{k, 2}})=\frac{1}{24000},  \\
\ga_n\ii J_{k, 3}&=\Big\{\frac{2}{5}+\frac{q-1}{15} : 1\leq q\leq 4\Big\}=\Big\{\frac{2}{5},\frac{7}{15},\frac{8}{15},\frac{3}{5}\Big\} \te{ with } V(P; \set{\ga_n\ii J_{k, 3}, J_{k, 3}})=\frac{1}{13500},  \\
\ga_n\ii J_{k, 4}&=\Big\{\frac{3}{5}+\frac{q-1}{20} : 1\leq q\leq 5\Big\}=\Big\{\frac{3}{5},\frac{13}{20},\frac{7}{10},\frac{3}{4},\frac{4}{5}\Big\} \te{ with } V(P; \set{\ga_n\ii J_{k, 4}, J_{k, 4}})=\frac{1}{24000},  \\
\ga_n\ii J_{k, 5}&=\Big\{\frac{4}{5}+\frac{q-1}{20} : 1\leq q\leq 5\Big\}=\Big\{\frac{4}{5},\frac{17}{20},\frac{9}{10},\frac{19}{20},1\Big\} \te{ with } V(P; \set{\ga_n\ii J_{k, 5}, J_{k, 5}})=\frac{1}{24000}.
\end{align*} 
Hence, using the expressions in \eqref{eqMe5}, we obtain 
\begin{align*}
\ga_n&=\Big\{ \frac{1}{35},\frac{3}{35},\frac{1}{7},\frac{1}{5}, \frac{1}{4},\frac{3}{10},\frac{7}{20},\frac{2}{5}, \frac{7}{15},\frac{8}{15},\frac{3}{5}, \frac{13}{20},\frac{7}{10},\frac{3}{4},\frac{4}{5}, \frac{17}{20},\frac{9}{10},\frac{19}{20},1\Big\} \te{ with } \\
V_n&=\sum_{j=1}^5 V(P; \set{\ga_n\ii J_{k, j}, J_{k, j}})=\frac{2683}{10584000}.
\end{align*} 
\end{exam} 

\section{conditional quantization for uniform distributions on the boundaries of regular polygons inscribed in a unit circle} \label{sec5}

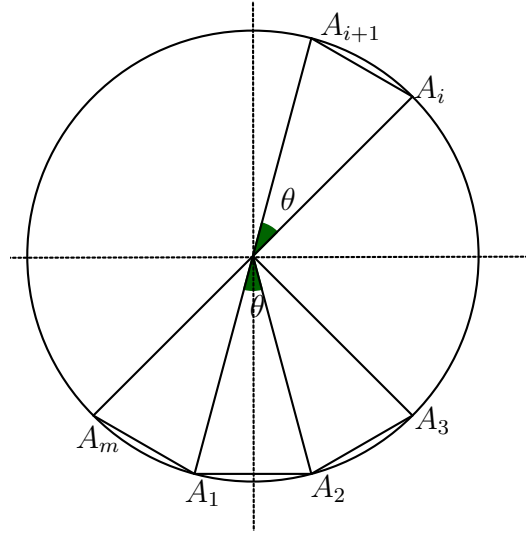
\begin{figure}
\begin{tikzpicture}[line cap=round,line join=round,>=triangle 45,x=1.0cm,y=1.0cm]
 \clip(-3.228655146506382,-3.654410217881296) rectangle (3.62483846731779,3.3884147257700925);
\draw [shift={(0.,0.)},line width=0.8 pt,color=qqwuqq,fill=qqwuqq,fill opacity=0.10000000149011612] (0,0) -- (-104.99998504172162:0.4507888805409463) arc (-104.99998504172162:-75.0000149582784:0.4507888805409463) -- cycle;
\draw [shift={(0.,0.)},line width=0.8 pt,color=qqwuqq,fill=qqwuqq,fill opacity=0.10000000149011612] (0,0) -- (45.:0.4507888805409463) arc (45.:75.00001495827838:0.4507888805409463) -- cycle;
\draw [line width=0.8 pt] (0.,0.) circle (3.cm);
\draw [line width=0.8 pt] (-0.776457,-2.89778)-- (0.776457,-2.89778);
\draw [line width=0.8 pt] (0.,0.)-- (-0.776457,-2.89778);
\draw [line width=0.8 pt] (0.,0.)-- (0.776457,-2.89778);
\draw [line width=0.8 pt] (0.776457,-2.89778)-- (2.12132,-2.12132);
\draw [line width=0.8 pt] (0.,0.)-- (2.12132,-2.12132);
\draw [line width=0.8 pt] (0.,0.)-- (-2.12132,-2.12132);
\draw [line width=0.8 pt] (-2.12132,-2.12132)-- (-0.776457,-2.89778);
\draw [line width=0.8 pt] (2.12132,2.12132)-- (0.776457,2.89778);
\draw [line width=0.8 pt] (0.,0.)-- (2.12132,2.12132);
\draw [line width=0.8 pt] (0.,0.)-- (0.776457,2.89778);
\draw (-2.4983771600300546,-2.116025544703235) node[anchor=north west] {$A_m$};
\draw (-1.0506686701728059,-2.8329977460556018) node[anchor=north west] {$A_1$};
\draw (0.6225394440270424,-2.7728925619834753) node[anchor=north west] {$A_2$};
\draw (2.0050112697220075,-1.871314800901582) node[anchor=north west] {$A_3$};
\draw (1.9950112697220075,2.531915852742294) node[anchor=north west] {$A_i$};
\draw (0.7478287002253894,3.36820435762584) node[anchor=north west] {$A_{i+1}$};
\draw (-0.1841697971450079,-0.36815927873779586) node[anchor=north west] {$\theta$};
\draw (0.2213824192336545,1.0541547708489808) node[anchor=north west] {$\theta$};
\draw [line width=0.8 pt,dash pattern=on 1pt off 1pt] (0.,4.)-- (0.011014274981214621,-3.779128474830959);
\draw [line width=0.8 pt,dash pattern=on 1pt off 1pt] (-3.3999549211119455,-0.022554470323070133)-- (3.707483095416974,-0.007528174305038579);
\end{tikzpicture}
\vspace{-0.1 in}
 \caption{The regular $m$-sided polygon inscribed in a unit circle.} \label{Fig1}
\end{figure}

Let the equation of the unit circle be $x_1^2+x_2^2=1$. Let $A_1A_2\cdots A_m$ be a regular $m$-sided polygon for some $m\geq 3$ inscribed in the circle as shown in Figure~\ref{Fig1}.  Let $\ell$ be the length of each side. Then,  the length of the boundary of the polygon is given by $\ell m$. Let $P$ be the uniform distribution defined on the boundary of the polygon. Then, the probability density function (pdf) $f$ for the uniform distribution $P$ is given by $f(x_1, x_2)=\frac 1{m\ell}$ for all $(x_1, x_2)\in A_1A_2\cdots A_m$, and zero otherwise. Let $\gq$ be the central angle subtended by each side of the polygon. Then, we know $\gq=\frac{2\pi}m$. Let the polar angles of the vertices $A_j$ of the polygon be given by $\gq_j$, where $1\leq j\leq m$. 
 Without any loss of generality, due to rotational symmetry, we can always assume that the side $A_1A_2$ of the polygon is parallel to the $x_1$-axis, as shown in Figure~\ref{Fig1}. Then, we have 
 \[\gq_1=\frac {3\pi}2-\frac {\gq} 2 =\frac {3\pi}2-\frac \pi m \te{ and } \gq_j=\gq_1+(j-1)\frac{2\pi}m \te{ for } 2\leq j\leq m.\]
Let $\gb$ be the set of all vertices of the polygon, i.e., 
\[\gb:= \set{(\cos\gq_j, \sin \gq_j) : 1\leq j\leq m}.\]
Notice that the Cartesian coordinates of the vertices $A_1$ and $A_2$ are given by, respectively, $(-\sin \frac{\pi}m, -\cos \frac{\pi}m)$ and $(\sin \frac{\pi}m, -\cos \frac{\pi}m)$. Hence, 
\[A_1A_2=\set{(t, -\cos \frac{\pi}m) : -\sin \frac{\pi}m\leq t\leq \sin \frac{\pi}m}.\]
Moreover, the length $\ell$ of each side is given by $\ell=2 \sin \frac{\pi}m.$
Let $\ga_n$ be a conditional optimal  set of $n$-points for $P$ with respect to the conditional set $\gb$, i.e., $\ga_n$ exists for all $n\geq m$.  
Let 
\begin{equation} \label{eqMe45} \te{card}(\ga_n\ii A_iA_{i+1})=n_i \te{ where } 1\leq i\leq m \te{ and } A_{m+1} \te{ is identified as } A_1.
\end{equation}  
Then, notice that 
\[n_i\geq 2 \te{ for all } 1\leq i\leq m \te{ and } n_1+n_2+\cdots +n_m=n+m,\]
as each of the vertices are counted two times. 

\begin{prop} \label{prop01} 
Let $P$ be the uniform distribution defined on the boundary of the regular $m$-sided polygon inscribed in the unit circle. Let $\te{card}(\ga_n\ii A_1A_2)=n_1$. Then, 
\begin{align}\label{eqMe44} \ga_n\ii A_1A_2&=\Big\{\Big(-\sin \frac{\pi }{m}+\frac{2 (j-1) \sin \frac{\pi }{m}}{n_1-1}, -\cos\frac{\pi }{m}\Big) : 1\leq j\leq n_1\Big\} 
 \end{align} 
with the corresponding distortion error 
\begin{equation} \label{eqMe451} V(P; \set{\ga_n\ii A_1A_2, A_1A_2})=\frac{\sin ^2\frac{\pi }{m}}{3 m (n_1-1){}^2}.
\end{equation} 
\end{prop}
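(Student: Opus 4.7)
The plan is to reduce the statement to the one-dimensional middle-interval optimization already handled in Proposition~\ref{prop0}. Since $A_1A_2$ lies on the horizontal line $x_2=-\cos\frac{\pi}{m}$, I would parametrize the side by $t\mapsto(t,-\cos\frac{\pi}{m})$ with $t\in[-\sin\frac{\pi}{m},\sin\frac{\pi}{m}]$. Uniformity of $P$ on the boundary of total length $2m\sin\frac{\pi}{m}$ gives the restriction of $P$ to $A_1A_2$ constant density $\frac{1}{2m\sin(\pi/m)}$ with respect to $dt$.

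Because $A_1, A_2\in\gb$, both endpoints are forced into $\ga_n\ii A_1A_2$, so I can write this set as $\set{(t_j,-\cos\frac{\pi}{m}) : 1\leq j\leq n_1}$ with $t_1=-\sin\frac{\pi}{m}$ and $t_{n_1}=\sin\frac{\pi}{m}$. Under the parametrization,
\[V(P;\set{\ga_n\ii A_1A_2,A_1A_2})=\frac{1}{2m\sin(\pi/m)}\int_{-\sin(\pi/m)}^{\sin(\pi/m)}\min_{1\leq j\leq n_1}(t-t_j)^2\,dt,\]
which is exactly the one-dimensional minimization of $V(P;\set{\ga_n\ii[c,d],[c,d]})$ solved in the middle-interval portion of Proposition~\ref{prop0}, now with $[c,d]=[-\sin\frac{\pi}{m},\sin\frac{\pi}{m}]$ of length $2\sin\frac{\pi}{m}$ and density $\frac{1}{2m\sin(\pi/m)}$ in place of $\frac{1}{b-a}$. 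Invoking that result pins down the minimizers as equally spaced, $t_j=-\sin\frac{\pi}{m}+(j-1)\frac{2\sin(\pi/m)}{n_1-1}$, which is precisely \eqref{eqMe44}. Evaluating gives
\[\frac{1}{12}\cdot\frac{(2\sin(\pi/m))^3}{2m\sin(\pi/m)\,(n_1-1)^2}=\frac{\sin^2(\pi/m)}{3m(n_1-1)^2},\]
which is \eqref{eqMe451}.

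The genuine obstacle — and the only step where conditional optimality does real work — is arguing that the free $n_1-2$ interior points on $A_1A_2$ are actually forced to achieve the minimum of this restricted integral, rather than sitting at some suboptimal configuration whose cost is compensated by Voronoi regions spilling over onto adjacent sides. I would handle this by establishing the side-by-side decomposition $V_n=\sum_{i=1}^m V(P;\set{\ga_n\ii A_iA_{i+1},A_iA_{i+1}})$, which in turn reduces to showing that for every $x$ in the relative interior of a side, the nearest element of $\ga_n$ lies on that same side. A short law-of-cosines computation delivers this, crucially using that every vertex already lies in $\gb\ci\ga_n$, so that the closer of the two vertices of $x$'s side always beats any candidate point on an adjacent side. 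Once the decomposition holds, the $i=1$ summand depends only on $\ga_n\ii A_1A_2$, and conditional optimality forces it to be minimized, giving the equally spaced configuration and completing the proof.
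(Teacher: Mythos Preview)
Your proof is correct and follows essentially the same route as the paper: parametrize $A_1A_2$ horizontally and invoke the middle-interval case of Proposition~\ref{prop0} to obtain the equally spaced configuration and the distortion value. Your additional justification of the side-by-side Voronoi decomposition is in fact more careful than the paper's own argument, which takes that decomposition for granted, appeals to Proposition~\ref{prop0} directly, and then re-integrates to verify~\eqref{eqMe451}.
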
 

\begin{proof} 
Notice that the line segment $A_1A_2$ is parallel to the $x_1$-axis and lies on the line $x_2=-\cos\frac{\pi }{m}$. Hence, replacing $c$ by $(-\sin \frac{\pi}m, -\cos\frac{\pi} m)$ and $d$ by $(\sin \frac{\pi}m, -\cos\frac{\pi} m)$, by Proposition~\ref{prop0}, we obtain 
\[\ga_n\ii A_1A_2=\Big\{\Big(c_j, -\cos\frac \pi m\Big) : 1\leq j\leq n_1\Big\}, \te{ where } c_j=-\sin\frac{\pi} m+\frac{2(j-1)\sin\frac{\pi} m}{m}.\]
Recall $\ell= 2\sin \frac{\pi}m$. Hence, 
\begin{align*}
V(P; \set{\ga_n\ii A_1A_2, A_1A_2})&=\frac 1 {m\ell} \Big(2 \int_{c_1}^{\frac{1}{2} (c_1+c_2)} \rho((t, -\cos \frac{\pi}m), (c_1, -\cos \frac{\pi}m))  \, dt\\
&\qquad +(n_1-2) \int_{\frac{1}{2} (c_1+c_2)}^{\frac{1}{2} (c_2+c_3)} \rho((t, -\cos \frac{\pi}m), (c_2, -\cos \frac{\pi}m)) \, dt \Big) \\
&=\frac{\sin ^2\frac{\pi }{m}}{3 m (n_1-1){}^2},
\end{align*} 
which yields the proposition. 
\end{proof} 

The following lemma, which is similar to Lemma~\ref{lemma2}, is also true here. 
\begin{lemma} \label{lemma315} 
Let $n\in \D N$ be such $n\geq m$. Let $n_i$ be the positive integers as defined by \eqref{eqMe45}. Then, for $1\leq i\neq  j\leq m$, $|n_i-n_j|=0 \te{ or } 1$. 
\end{lemma}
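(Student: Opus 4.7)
The plan is to mimic the argument of Lemma~\ref{lemma2} almost verbatim, the only preparation needed being that the per-side distortion formula established in Proposition~\ref{prop01} actually holds for every side of the polygon, not just for $A_1A_2$.

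First I would observe that the uniform distribution $P$ on the boundary of $A_1A_2\cdots A_m$ is invariant under rotation about the origin by the angle $\frac{2\pi}{m}$, and the Euclidean metric is of course rotation-invariant. Composing Proposition~\ref{prop01} with the rotation that carries $A_iA_{i+1}$ to $A_1A_2$ therefore yields, for every $1\leq i\leq m$,
\[V(P;\set{\ga_n\ii A_iA_{i+1},A_iA_{i+1}})=\frac{\sin^2\frac{\pi}{m}}{3m(n_i-1)^2}.\]
Because the two endpoints of each side belong to the conditional set $\gb$ and are therefore shared between neighboring counts, we have $n_1+n_2+\cdots+n_m=n+m$, and the $n$th conditional quantization error reduces to the symmetric expression
\[V_n=\frac{\sin^2\frac{\pi}{m}}{3m}\sum_{i=1}^m\frac{1}{(n_i-1)^2}.\]

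Next I would fix two distinct indices $1\leq i\neq j\leq m$ and treat $s:=n_i+n_j$ as fixed while allowing the pair $(n_i,n_j)$, with $n_i,n_j\geq 2$, to vary. The remaining sides contribute a constant to $V_n$, so optimality of $\ga_n$ forces the pair $(n_i,n_j)$ to minimize $\frac{1}{(n_i-1)^2}+\frac{1}{(n_j-1)^2}$. Exactly as in the proof of Lemma~\ref{lemma2}, I split on the parity of $s$: writing $s=2q$ or $s=2q+1$, the (integer-constrained) minimum is attained at $n_i=n_j=q$ in the even case, and at $\set{n_i,n_j}=\set{q,q+1}$ in the odd case. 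In either case $|n_i-n_j|\in\set{0,1}$, and since the pair $(i,j)$ was arbitrary the lemma follows.

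The only mild subtlety is justifying the integer-constrained minimization cleanly; equivalently, one checks that shifting one point from the larger to the smaller side, i.e.\ replacing $(n_i,n_j)$ by $(n_i-1,n_j+1)$ whenever $n_i\geq n_j+2$, strictly lowers $\frac{1}{(n_i-1)^2}+\frac{1}{(n_j-1)^2}$. This is immediate from the strict convexity and monotonicity of $f(x)=1/(x-1)^2$ on $[2,\infty)$, since strict convexity makes the forward difference $f(k)-f(k-1)$ strictly increasing in $k$. This is essentially the same mechanism used for Lemma~\ref{lemma2}, so I do not anticipate any real obstacle beyond recording the symmetry reduction in the first step.
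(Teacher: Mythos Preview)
Your proposal is correct and follows essentially the same approach as the paper: the paper gives no separate proof of this lemma at all, merely stating that it ``is similar to Lemma~\ref{lemma2},'' whose proof is precisely the parity split on $n_i+n_j$ together with the ``by routine'' minimization that you carry out. Your added symmetry remark (that the per-side distortion formula of Proposition~\ref{prop01} transfers to every side via the rotation by $2\pi/m$) and the convexity justification for the integer swap are welcome clarifications but do not change the route.
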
 

Let $T: \D R^2 \to \D R^2$ be an affine transformations such that for all $(x, y)\in \D R^2$, we have 
\[T(x, y)=(a x+b y, c x+d y),\]
where 
\begin{align*}
a&=\frac{1}{2}\Big(\sin \frac{3 \pi }{m} \csc \frac{\pi }{m}-1\Big), \, b= \frac{1}{2} \Big(-\sin \frac{3 \pi }{m}\sec  \frac{\pi }{m} -\tan \frac{\pi }{m}\Big),\\
c&= \frac{1}{2} \Big(\cot \frac{\pi }{m}-\cos \frac{3 \pi }{m} \csc \frac{\pi }{m}\Big), \te{ and } d= \frac{1}{2} \Big(\cos \frac{3 \pi }{m} \sec \frac{\pi }{m}+1\Big).
\end{align*} 
Also, for any $j\in \D N$, by $T^j$ it is meant the composition mapping 
$T^j=T\circ T\circ T\circ\cdots j\te{-times}.$ If $j=0$, i.e., by $T^0$ it is meant the identity mapping on $\D R^2$. Then, notice that 
\begin{align*} T^{i-1}(A_1A_{2})&=A_{i}A_{i+1} \te{ for } 1\leq i\leq m, \te{ where } A_{m+1} \te{ is identified as } A_1.  
\end{align*}
Let us now give the following theorem, which is the main theorem is this section. This theorem helps us to determine the conditional optimal  sets of $n$-points and the $n$th conditional quantization errors for all $n\in \D N$ with $n\geq m$. 

\begin{theorem} \label{Theo21}
For $n\geq m$, let $\ga_n$ be a conditional optimal  set of $n$-points such that $n=mk+\ell$ for some $k, \ell \in \D N$ and $0\leq\ell<m$. Then, identifying $A_{m+1}$ by $A_1$, we have
\par 
$(i)$  if $\ell=0$, then 
$\te{card}(\ga_n\ii A_iA_{i+1})=k+1  \te{ for } 1\leq i\leq m$;
\par
$(ii)$ if $1\leq \ell<m$, then 
$\te{card}(\ga_n\ii A_iA_{i+1})=k+2 \te{ for } i\in \set{i_1, i_2, \cdots, i_{\ell}}$ and $\te{card}(\ga_n\ii A_iA_{i+1})=k+1$ for $i\in \set{1, 2,  \cdots, m}\setminus \set{i_1, i_2, \cdots, i_{\ell}}$,
where $\set{i_1, i_2, \cdots, i_{\ell}}$ is any subset of $\ell$ elements of the set $\set{1, 2,  \cdots, m}$. 
\end{theorem}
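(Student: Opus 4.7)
The plan is to reduce the problem to an integer optimization identical in form to the one already handled in Section~\ref{sec4}, and then invoke Lemma~\ref{lemma315} to conclude.

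First, I would exploit rotational symmetry. The affine map $T$ introduced just before the theorem satisfies $T^{i-1}(A_1A_2)=A_iA_{i+1}$, and because $P$ is uniform on the boundary of the polygon, $P$ is $T$-invariant on each side. Hence, replacing Proposition~\ref{prop01} by its rotated version on the side $A_iA_{i+1}$, the $n_i$ elements of $\ga_n\ii A_iA_{i+1}$ are forced (for any conditional optimal set) to be equally spaced along that side with the two endpoint vertices included, giving
\[
V(P;\set{\ga_n\ii A_iA_{i+1},\,A_iA_{i+1}})=\frac{\sin^2\frac{\pi}{m}}{3m(n_i-1)^2},\qquad 1\le i\le m.
\]
Summing over $i$, and using $\ga_n=\UU_{i=1}^m \ga_n\ii A_iA_{i+1}$, the $n$th conditional quantization error becomes
\[
V_n=\frac{\sin^2\frac{\pi}{m}}{3m}\sum_{i=1}^m \frac{1}{(n_i-1)^2},
\]
subject to the constraints $n_i\ge 2$ and $n_1+n_2+\cdots+n_m=n+m$ (the $+m$ comes from double-counting the $m$ vertices).

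Next, I would invoke Lemma~\ref{lemma315}, which asserts $|n_i-n_j|\le 1$ for all $1\le i\neq j\le m$. This immediately forces the multiset $\set{n_1,\ldots,n_m}$ to take values in $\set{s,s+1}$ for a single integer $s\ge 2$. Writing $n=mk+\ell$ with $0\le\ell<m$, I would split into the two cases of the theorem:

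\textbf{Case $\ell=0$:} The only way to have $\sum n_i=mk+m$ with all $n_i$ equal or consecutive is $n_i=k+1$ for every $i$, since any deviation to values $k$ and $k+2$ would violate Lemma~\ref{lemma315}. This gives $(i)$.

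\textbf{Case $1\le\ell<m$:} Writing $n_i\in\set{s,s+1}$ with $p$ indices taking the value $s+1$ and $m-p$ taking the value $s$, the constraint $ps+p+(m-p)s=n+m$ yields $ms+p=mk+\ell+m$. Since $0\le p<m$ (if $p=m$ then all $n_i=s+1$ and we are back in the previous pattern with a smaller $s$, yielding $\ell=0$), the unique solution is $s=k+1$, $p=\ell$. Thus exactly $\ell$ of the $n_i$ equal $k+2$ while the remaining $m-\ell$ equal $k+1$; which $\ell$ indices play this role is unconstrained, giving $(ii)$.

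The main conceptual step is the first paragraph, the reduction to a one-variable-per-side optimization via the symmetry $T$ and Proposition~\ref{prop01}; the rest is an integer arithmetic unpacking of Lemma~\ref{lemma315}. I do not expect any serious obstacle, since Lemma~\ref{lemma315} (proved in the same spirit as Lemma~\ref{lemma2}) already does the delicate convexity comparison on pairs of sides.
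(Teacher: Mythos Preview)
Your proposal is correct and follows essentially the same approach as the paper: the paper's proof consists of the single sentence ``The proof follows as a consequence of Lemma~\ref{lemma315},'' relying on the setup (the constraint $\sum n_i=n+m$, $n_i\ge 2$) and Proposition~\ref{prop01} already established in the preceding text. You have simply spelled out the reduction via the rotational symmetry $T$ and the integer arithmetic that extracts the values of the $n_i$ from the conclusion $|n_i-n_j|\le 1$ of Lemma~\ref{lemma315}, which the paper leaves implicit.
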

 \begin{proof}
 The proof follows as a consequence of Lemma~\ref{lemma315}. 
 \end{proof}

\subsection{Conditional optimal  sets of $n$-points and the $n$th conditional quantization errors} Let $n\geq m$ be a positive integer. To determine the conditional optimal sets of $n$-points and the $n$th conditional quantization errors, first using Theorem~\ref{Theo21}, we determine the values of $n_i$, where $n_i=\te{card}(\ga_n\ii A_i A_{i+1})$ and $A_{m+1}$ is identified as $A_1$. 
Recall Proposition~\ref{prop01}. For each $n_i$ assume that $\te{card}(\ga_n\ii A_1A_2)=n_i$, and calculate 
$\ga_n\ii A_1 A_2$ and $V(P; \set{\ga_n\ii A_1A_{2}, A_1A_2})$, denote them by  $\ga_n\ii A_1 A_2(n_i)$ and $V(P; \set{\ga_n\ii A_1A_{2}, A_1A_2})(n_i)$, respectively. Now, recall the affine transformation. Since the affine transformation, considered in this section, preserves the length, the distortion errors do not change under the affine transformation. Hence, for each $n_i$, we obtain $\ga_n\ii A_iA_{i+1}$ and $V(P; \set{\ga_n\ii A_iA_{i+1}, A_iA_{i+1}})$ as follows: 
\begin{align*} &\ga_n\ii A_iA_{i+1} =T^{i-1}\Big(\ga_n\ii A_1 A_2(n_i)\Big), \te{ and }\\
 V(P;  \set{\ga_n &\ii A_iA_{i+1}, A_iA_{i+1}})  =V(P; \set{\ga_n\ii A_1A_{2}, A_1A_{2}})(n_i).
 \end{align*} 
Once $\ga_n\ii A_iA_{i+1}$ and $ V(P; \set{\ga_n\ii A_iA_{i+1}, A_iA_{i+1}})$ are obtained, we calculate the conditional optimal  sets $\ga_n$ and the $n$th conditional quantization errors  using the following formulae: 
\[\ga_n=\UU_{i=1}^m T^{i-1}\Big(\ga_n\ii A_1 A_2(n_i)\Big)=\UU_{i=1}^mT^{i-1}\Big\{\Big(-\sin \frac{\pi }{m}+\frac{2 (j-1) \sin \frac{\pi }{m}}{n_i-1}, -\cos\frac{\pi }{m}\Big) : 1\leq j\leq n_i\Big\}\] 
and   \[V_n=\sum_{i=1}^m V(P; \set{\ga_n\ii A_1A_{2}, A_1A_{2}})(n_i)=\sum_{i=1}^m\frac{\sin ^2\frac{\pi }{m}}{3 m (n_i-1){}^2}.  \]
\qed

\begin{remark}
Since the conditional quantization dimension is same as the quantization dimension (see \cite{PR4}), and it is well-known that the quantization dimension of an absolutely continuous probability measure equals the Euclidean dimension of the underlying space, we can assume that the conditional quantization dimension of $P$ is one, i.e., $D(P)=1$.
\end{remark}

Let us now give the following proposition.

\begin{prop} \label{propMe23} 
Let $\ga_n$ be an optimal set of $n$-points for $P$ such that $n=mk$, where $k\in \D N$. Then, 
\[V_n=\frac 1{3 k^2}\sin^2\frac{\pi }{m}.\]
\end{prop}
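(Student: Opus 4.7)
The plan is to combine Theorem~\ref{Theo21}(i) with the per-side distortion formula from Proposition~\ref{prop01}. Since $n = mk$ corresponds exactly to the case $\ell = 0$ in the decomposition $n = mk + \ell$, Theorem~\ref{Theo21}(i) immediately tells me that $n_i := \text{card}(\ga_n \cap A_iA_{i+1}) = k+1$ for every $1 \leq i \leq m$, with $A_{m+1}$ identified with $A_1$. This uniform allocation is the crucial structural input.

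Next, I would apply Proposition~\ref{prop01} to the side $A_1A_2$ with $n_1 = k+1$ to get
\[
V(P; \{\ga_n \cap A_1A_2, A_1A_2\}) = \frac{\sin^2\frac{\pi}{m}}{3m(n_1 - 1)^2} = \frac{\sin^2\frac{\pi}{m}}{3mk^2}.
\]
Then I would invoke the observation (made in the paragraph preceding the proposition) that the affine transformation $T$ maps $A_1A_2$ isometrically onto $A_iA_{i+1}$ and preserves the distortion contribution, so that for each $i$,
\[
V(P; \{\ga_n \cap A_iA_{i+1}, A_iA_{i+1}\}) = \frac{\sin^2\frac{\pi}{m}}{3mk^2}.
\]

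Finally, I would use the summation formula for $V_n$ stated just before the proposition, namely $V_n = \sum_{i=1}^m V(P; \{\ga_n \cap A_iA_{i+1}, A_iA_{i+1}\})$, to conclude
\[
V_n = m \cdot \frac{\sin^2\frac{\pi}{m}}{3mk^2} = \frac{\sin^2\frac{\pi}{m}}{3k^2},
\]
which is the claim. There is no real obstacle here: the proposition is a direct corollary of the machinery already developed, with the only substantive content being the recognition that $n = mk$ falls in the $\ell = 0$ case so that every side receives the same number $k+1$ of points, making the sum collapse cleanly.
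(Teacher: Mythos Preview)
Your proposal is correct and follows essentially the same approach as the paper. The only cosmetic difference is that the paper cites Lemma~\ref{lemma315} directly to conclude $n_1=\cdots=n_m=k+1$, whereas you cite Theorem~\ref{Theo21}(i), which is itself an immediate consequence of that lemma; the remainder (applying the per-side distortion formula \eqref{eqMe451} and summing over the $m$ congruent sides) is identical.
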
 

\begin{proof} Let $n=mk$ for some $k\in \D N$.  Let $n_i$ be the positive integers as defined by \eqref{eqMe45}. Then, by Lemma~\ref{lemma315}, we can say that 
\[n_1=n_2=\cdots=n_m=k+1.\]
Notice that each $n_i$ equals $k+1$. It happens because $\ga_n$ contains $m$ distinct elements from each side, but in each $n_i$ both the end points are counted. 
Hence, by \eqref{eqMe451}, we have $V_n=\frac 1{3 k^2}\sin^2\frac{\pi }{m}$. Thus, the proof of the proposition is complete. 
 \end{proof}   
\begin{theorem}\label{theo45}
Let $P$ be the uniform distribution on the boundary of a regular $m$-sided polygon inscribed in a unit circle. Then, the conditional quantization coefficient for $P$ exists as a finite positive number and equals $\frac{1}{3} m^2 \sin ^2(\frac{\pi }{m})$, i.e.,
$\lim\limits_{n\to \infty} n^2 (V_n-V_\infty)=\frac{1}{3} m^2 \sin ^2(\frac{\pi }{m}).$
\end{theorem}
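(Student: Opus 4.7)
The plan is to sandwich $V_n$ between two values that are already computed explicitly in Proposition~\ref{propMe23}, then take limits via the squeeze theorem.

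First I would observe that $V_\infty=0$, which follows from the remark that $D(P)=1$ together with the fact that $V_n\to 0$ can also be seen directly from the formula in Proposition~\ref{propMe23}: along the subsequence $n=mk$ the error $V_{mk}=\frac{1}{3k^2}\sin^2(\pi/m)\to 0$, and $V_n$ is non-increasing in $n$ (enlarging the admissible set of extra points $\alpha$ in Definition~\ref{defi0} can only decrease the infimum), so $V_n\to 0$.

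Next, given $n\geq m$, choose the unique $k\in\mathbb{N}$ with $mk\leq n<m(k+1)$. By monotonicity of $V_n$ in $n$,
\begin{equation*}
V_{m(k+1)}\;\leq\;V_n\;\leq\;V_{mk}.
\end{equation*}
Applying Proposition~\ref{propMe23} to both ends gives
\begin{equation*}
\frac{\sin^2(\pi/m)}{3(k+1)^2}\;\leq\;V_n\;\leq\;\frac{\sin^2(\pi/m)}{3k^2},
\end{equation*}
and multiplying through by $n^2$ yields
\begin{equation*}
\left(\frac{n}{k+1}\right)^{\!2}\frac{\sin^2(\pi/m)}{3}\;\leq\;n^2 V_n\;\leq\;\left(\frac{n}{k}\right)^{\!2}\frac{\sin^2(\pi/m)}{3}.
\end{equation*}

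From $mk\leq n<m(k+1)$ one has $m\leq n/k<m(1+1/k)$ and $m(1-1/(k+1))<n/(k+1)<m$, so both $n/k\to m$ and $n/(k+1)\to m$ as $n\to\infty$ (equivalently $k\to\infty$). The squeeze theorem then delivers
\begin{equation*}
\lim_{n\to\infty} n^2 V_n=\frac{m^2}{3}\sin^2\!\frac{\pi}{m},
\end{equation*}
and since $V_\infty=0$ this is exactly the claim. The only mildly subtle step is justifying monotonicity of $V_n$, but this is immediate from the definition since the feasible family in \eqref{eq0} enlarges as $n$ grows; no genuine obstacle appears, as all the hard structural work (the explicit closed form for $V_{mk}$) is already done in Proposition~\ref{propMe23}.
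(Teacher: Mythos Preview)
Your proof is correct and follows essentially the same approach as the paper: both sandwich $n$ between consecutive multiples $mk$ and $m(k+1)$, invoke monotonicity of $V_n$ together with the explicit value $V_{mk}=\frac{1}{3k^2}\sin^2(\pi/m)$ from Proposition~\ref{propMe23}, and finish with the squeeze theorem. Your write-up is in fact slightly more explicit than the paper's in justifying monotonicity and the vanishing of $V_\infty$, but the underlying argument is the same.
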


\begin{proof} Let $n\in \D N$ be such that $n\geq m$. Then, there exists a unique positive integer $\ell(n)\geq 2$ such that $m\ell(n)\leq n<m(\ell(n)+1)$. Then,
\begin{equation} \label{eq46} (m\ell(n))^2V_{m(\ell(n)+1)}<n^2 V_n<(m(\ell(n)+1))^2V_{m\ell(n)}.
\end{equation}
Recall Proposition~\ref{propMe23}. By squeeze theorem, we have $ V_\infty =\lim_{n\to \infty} V_n=0.$
Moreover, we have  
\begin{align*}
&\lim_{n \to \infty} (m\ell(n))^2(V_{m(\ell(n)+1)}-V_\infty)=\lim_{n\to \infty}  (m\ell(n))^2 \frac 1{3 (\ell(n)+1)^2}\sin^2\frac{\pi }{m} =\frac{1}{3} m^2 \sin ^2\frac{\pi }{m},\end{align*}
and
\begin{align*}
&\lim_{n \to \infty} (m(\ell(n)+1))^2(V_{m\ell(n)}-V_\infty)=\lim_{n\to \infty}  (m(\ell(n)+1))^2 \frac 1{3 (\ell(n))^2}\sin^2\frac{\pi }{m}=\frac{1}{3} m^2 \sin ^2\frac{\pi }{m},
\end{align*}
and hence, by \eqref{eq46}, using squeeze theorem, we have
$\mathop{\lim}\limits_{n\to\infty} n^2 (V_n-V_\infty)=\frac{1}{3} m^2 \sin ^2\frac{\pi }{m}$, i.e., the conditional quantization coefficient exists as a finite positive number which equals $\frac{1}{3} m^2 \sin ^2\frac{\pi }{m}$.
Thus, the proof of the theorem is complete.
\end{proof}

\begin{remark}
It is known that for an absolutely continuous probability measure, the quantization dimension equals the Euclidean dimension of the underlying space, and the quantization coefficient exists as a finite positive number (see \cite{BW}). Since the conditional quantization dimension is same as the quantization dimension, and the conditional quantization coefficient is same as the quantization coefficient (see \cite{PR4}), by Theorem~\ref{theo45}, we can conclude that the quantization coefficient for the uniform distribution defined on the boundary of a regular $m$-sided polygon inscribed in a unit circle is $\frac{1}{3} m^2 \sin ^2\frac{\pi }{m}$, which depends on $m$ and is an increasing function of $m$. Thus, we can conclude that for absolutely continuous probability measures given in an Euclidean space, the quantization dimensions remain constant and it is equal to the dimension of the underlying space, but the quantization coefficients can be different.    
\end{remark}  

Let us now conclude the paper with the following remark. 

\begin{remark}
Although the conditional quantization in this paper is investigated for uniform distributions on line segments and regular polygons, by using a similar technique or by giving a major overhaul of the technique given in this paper, interested researchers can explore them for any probability distribution defined on the boundary of any geometrical shape.
\end{remark} 

\subsection*{Acknowledgements} We would like to thank the anonymous referees for their valuable comments and
suggestions.

\end{document}